\def\HiLi{\leavevmode\rlap{\hbox to \linewidth{\color{yellow!50}\leaders\hrule height .8\baselineskip depth .5ex\hfill}}}
\numberwithin{equation}{section}
\definecolor{teal}{rgb}{0.0, 0.5, 0.5}
\title{Approximation properties of shallow quadratic neural networks and clustering applications}
\author{Leon Frischauf$^1$\\{\footnotesize\href{mailto:leon.frischauf@univie.ac.at}{leon.frischauf@univie.ac.at}}
\and Otmar Scherzer$^{1,2,3}$\\{\footnotesize\href{mailto:otmar.scherzer@univie.ac.at}{otmar.scherzer@univie.ac.at}}
\and Cong Shi$^{1,4*}$\\{\footnotesize\href{mailto:shic3@mail.sysu.edu.cn}{cong.shi@univie.ac.at}}}
\date{}
\titleformat{\section}[block]{\large\sc\filcenter}{\thesection.}{0.5ex}{}[]
\titleformat{\subsection}[runin]{\bf}{\thesubsection.}{0.5ex}{}[.]
\newtheorem{lemma}{Lemma}[section]
\newaliascnt{proposition}{lemma}
\newaliascnt{corollary}{lemma}
\newtheorem{corollary}[corollary]{Corollary}
\newaliascnt{theorem}{lemma}
\newtheorem{theorem}[theorem]{Theorem}
\newaliascnt{definition}{lemma}
\newtheorem{definition}[definition]{Definition}
\newaliascnt{example}{lemma}
\newtheorem{example}[example]{Example}
\newaliascnt{assumption}{lemma}
\newaliascnt{notation}{lemma}
\newtheorem{notation}[notation]{Notation}
\newtheorem{remark}{Remark}
\newtheorem*{proof}{Proof}
\newcommand{\N}{\mathbb{N}}
\newcommand{\Z}{\mathbb{Z}}
\newcommand{\R}{\mathbb{R}}
\let\RE\Re
\let\Re=\undefined
\DeclareMathOperator{\Re}{\RE e}
\let\IM\Im
\let\Im=\undefined
\DeclareMathOperator{\Im}{\IM m}
\newcommand{\abs}[1]{\left|#1\right|}
\newcommand{\norm}[1]{\left\|#1\right\|}
\newcommand{\set}[1]{\left\{ #1\right\}}
\newcommand{\e}{\mathrm e}
\let\ii\i
\renewcommand{\i}{\mathrm i}
\newcommand{\vx}{{\vec{x}}}
\newcommand{\vy}{{\vec{y}}}
\newcommand{\vw}{\vec{w}}
\newcommand{\vz}{\vec{z}}
\newcommand{\vxm}{{\bf{x}}}
\newcommand{\vwm}{{\bf{w}}}
\newcommand{\vfm}{{\bf{f}}}
\newcommand{\vzm}{{\bf{z}}}
\newcommand{\interval}{\texttt{I}}
\newcommand{\quader}{\texttt{Q}}
\newcommand{\I}[1]{\mathcal{I}_{#1}}
\newcommand\blfootnote[1]{%
  \begingroup
  \renewcommand\thefootnote{}\footnote{#1}%
  \addtocounter{footnote}{-1}%
  \endgroup
}
\begin{document}

\maketitle
\thispagestyle{empty}
\begin{center}
  \hspace*{2em}
  \parbox[t]{15em}{\footnotesize\raggedright
    \noindent
    $^1$Faculty of Mathematics\\
    University of Vienna\\
    Oskar-Morgenstern-Platz 1\\
    A-1090 Vienna, Austria\\\vspace*{1ex}
  }
  \hfil
  \parbox[t]{15em}{\footnotesize\raggedright
    \noindent
    $^2$Johann Radon Institute for Computational and Applied Mathematics (RICAM)\\
    Altenbergerstraße 69\\
    A-4040 Linz, Austria
  }
  \hfil
\end{center}
\begin{center}
  \hspace*{2em}
  \parbox[t]{15em}{\footnotesize\raggedright
    \noindent
    $^3$Christian Doppler Laboratory
    for Mathematical Modeling and Simulation
    of Next Generations of Ultrasound Devices (MaMSi)\\
    Oskar-Morgenstern-Platz 1\\
    A-1090 Vienna, Austria
  }
  \hfil
  \parbox[t]{15em}{\footnotesize\raggedright
    \noindent
    $^4$School of Mathematics (Zhuhai)\\
    Sun Yat-Sen University\\
    Hanlin Rd, 519082 Zhuhai\\
    Guangdong Province, China\\\vspace*{1ex}
  }
  \hfil
\end{center}
\blfootnote{${}^*$Cong Shi is the corresponding author.}

\begin{abstract}
In this paper we study \emph{shallow} neural network functions which are linear combinations of compositions of activation and \emph{quadratic} functions, replacing standard
\emph{affine linear} functions, often called neurons.
We show the universality of this approximation and prove convergence rates results based on the theory of wavelets and statistical learning.
We show for simple test cases that this ansatz requires a smaller numbers of neurons than standard affine linear neural networks. Moreover, we investigate the efficiency of this approach for clustering tasks with the MNIST data set.
Similar observations are made when comparing \emph{deep (multi-layer)} networks.
\end{abstract}
\hspace*{5.8ex}\textbf{MSC:} 41A30, 65XX, 68TXX\\
\hspace*{5.8ex}\textbf{Keywords:} Generalized neural network; universal approximation; convergence rates; numerical implementation and algorithm


\section{Introduction}
Approximation of functions with \emph{shallow (single-layer) neural networks} is a classical topic
of machine learning and in approximation theory. The basic mathematical problem
consists in approximating a function $g: \R^n \to \R$ by \emph{neural network
functions} of the form
\begin{equation}\label{eq:classical_approximation}
G(\vx) := \sum_{j=1}^{N} \alpha_j\sigma\left(p_j(\vx) \right) \text{ where }
p_j(\vx) = \vwm_j^T \vx +\theta_j
\text{ with } \alpha_j, \theta_j \in \R \text{ and } \vx, \vwm_j \in \R^{n}.
\end{equation}
Here $\sigma: \R \to \R$ is a given function, called the \emph{activation function}
and $\vwm_j \in \R^n$, $\theta_j \in \R$ and $\alpha_j \in \R$, $j=1,\ldots,N$ are parameters.
We name functions of the form in \autoref{eq:classical_approximation} {\emph affine linear neural networks}.
This approximation problem has been well studied in the literature already in the 80ties and 90ties, see for instance \cite{Pin99,Bar93,Cyb89,HorStiWhi89b,LesLinPinScho93,Mha93}, leading to the \emph{universal approximation property} of affine linear neural networks. Later on the universal approximation property has been established for different classes of neural networks: Examples are dropout neural networks (see \cite{WagWanLia13,ManPelPorSanSen22}), convolutional neural networks (CNN) (see for example \cite{Zho18, Zho20}), recurrent neural networks (RNN) (see \cite{SchZim07,Ham00}), networks with random nodes
(see \cite{Whi89}), with random weights and biases (see \cite{PaoParSob94,IgePao95}) and with fixed neural network topology (see \cite{GelMaoLi99}).

Two classes of neural network are of particular importance for this work: In \cite{TsaTefNikPit19}, the authors introduced \emph{paraboloid neurons} and illustrate their efficiency in comparison with conventional affine linear neural networks in a number of applications. In \cite{FanXioWan20}, the authors proposed  circular neurons  and deep quadratic networks.
The approaches of \cite{TsaTefNikPit19,FanXioWan20} are conceptually similar to the idea of this paper, where we replace the affine linear functions $\set{p_j}$ by quadratic polynomials, leading to \emph{quadratic neural network functions} of the form
\begin{equation}\label{eq:quadratic_approximation}
G(\vx) := \sum_{j=1}^{N} \alpha_j\sigma\left(\vx^T A_j \vx + \vwm_j^T \vx +\theta_j\right) \text{ with } \alpha_j, \theta_j \in \R, \vwm_j \in \R^{n} \text{ and } A_j \in \R^{n \times n}.
\end{equation}
In comparison, neural networks considered here are shallow (meaning that they have only a few numbers of layers) the networks from \cite{FanXioWan20} can, theoretically, have an infinite number of layers. The paraboloid neurons from \cite{TsaTefNikPit19} are a subset of the quadratic neurons.

Clearly, the functions from \autoref{eq:quadratic_approximation} represent a more general class of function then shallow affine linear neural networks, and therefore it might be expected that the number of nodes $N$ for an approximation of a function $g$ might be lower than for an affine linear neural network as in \autoref{eq:classical_approximation}, which is indeed true
as we show numerically in \autoref{sec:numerics}. In particular we show numerically that a shallow quadratic neural network can even be as efficient as a deep affine linear neural network.
We essentially base our convergence (rates) analysis of approximation properties of quadratic neural networks on the fundamental results of \cite{Mha93,DenHan09,ShaCloCoi18}. In \cite{ShaCloCoi18} they concentrate on analyzing \emph{4-layer} affine linear neural networks (which is already considered deep): For comparison purposes, in our numerical examples, we therefore concentrate mainly on \emph{3-layer} (these are actually termed shallow) quadratic neural networks.
\footnote{
In this paper we make a count of numbers of layers as in \cite{ShaCloCoi18}: Analogously we refer to an affine linear L-layer network when it consists of input and output layers and $L-2$ hidden (inner) layers.}

Particular achievements of our paper are as follows:
\begin{itemize}
\item We highlight that quadratic neural networks can be implemented relative easily in \emph{TensorFlow} \cite{AbaAgaBamBreChe16_report} and \emph{Keras} \cite{Cho15} by \emph{customized} layers.
\item Compared with \cite{ShaCloCoi18} the number of both layers and neurons in our case is lower because of the quadratic neurons: they used a 4-layer network with a total of at least $(8d + 2)N$ ordinary linear neurons in it for the same approximation level.
\item Furthermore, the original version of \autoref{thm:ConvR} from \cite{ShaCloCoi18} has been applied to prove convergence for 4-layer networks. Their network deals with a manifold setting and the first layer is responsible to determine \emph{compact atlas maps}; see \autoref{fig:CompaNets}, where the left image corresponds to \cite[Figure 3]{ShaCloCoi18}. However, the \emph{compact atlas} is essential in their analysis, which is related to the fact that affine linear neurons of the form $\vx \to \varphi(\vx):= C_d \sigma(\omega^T \vx + \theta)$ \emph{cannot} satisfy \autoref{it4} in \autoref{def:wavelet}, which is $\int_{\R^n}\varphi(\vx)d\vx=1$, and in turn the results from \cite{DenHan09} cannot be applied in free space $\R^n$, but of course it applies, when it is constrained to a compact set, which is the case for some quadratic functions.
\end{itemize}
The paper presents a proof of concept and thus we restrict attention only to quadratic neural networks although generalizations to higher order neural networks (such as cubic) is quite straightforward.

\section{Generalized universal approximation theorem}
\label{sec:guat}
In this section we review the \emph{universal approximation theorem} as formulated by \cite{Cyb89} and prove a generalization.
To this end we also need to introduce some elementary definitions and notation:
\begin{notation}[Vectors]
For two integer numbers $m,n \in \N$ we always assume that $m \geq n$.
Line vectors in $\R^m$ and $\R^n$ are denoted by
\begin{equation*}
\vwm = (w^{(1)},w^{(2)},\cdots,w^{(m)})^T \text{ and }
\vx = (x_1,x_2,\cdots,x_n)^T, \text{ respectively.}
\end{equation*}
The same notation will apply to functions: $\vfm, \vec{f}$ are $m$, $n$-dimensional vector valued functions, respectively.
\end{notation}
\begin{notation}[$\mathcal{L}^1$ space]
Define the norm following from Equation (1.10) in \cite{BarCohDahDev08}
\begin{equation*}
\|f\|_{\mathcal{L}^1} := \inf\{\sum_{g\in D}|c_g| | f= \sum_{g\in D}c_g g\},
\end{equation*}
where $c_g$ are the coefficients of the wavelet expansion and $D$ is the set of wavelet functions. Notice that the notation $\mathcal{L}^1$ does not refer to the common $L^1$-function space and depends on the choice of the wavelet system. For more properties and details on this space see \cite[Remark 3.11]{ShaCloCoi18}.
\end{notation}

\begin{definition}[Discriminatory function] \label{de:disc_function}
Let $\I{n} =[0,1]^n$ denote the closed $n$-dimensional unit-cube.
		A function $\sigma : \R \to \R$ is called \emph{discriminatory} if every measure $\mu$ on $\I{n}$, which satisfies
		\begin{equation*}
			\int_{\I{n}} \sigma (\vw^T \vx + \theta)\,d\mu(\vx)=0 \quad \text{ for all } \vw \in \R^n \text{ and } \theta \in \R
		\end{equation*}
	    implies that $\mu \equiv 0$.
\end{definition}
Note that every non-polynomial function is \emph{discriminatory} (this follows from the results in \cite{LesLinPinScho93}).

\begin{example} \label{ex:sigmoid}
	The sigmoid function, defined by $\sigma(t) = \frac{1}{1+\e^{-t}}$ for all $t \in \R$,
    is discriminatory for the Lebesgue-measure.
\end{example}
With these basic concepts we are able to recall Cybenko's universal approximation result.
\begin{theorem}[\cite{Cyb89}]\label{le:LinearApproximation}
 Let $\sigma:\R \to \R^+$ be a continuous discriminatory function. Then, for
 every function $g \in C(\I{n})$ and every $\epsilon>0$, there exists a function
\begin{equation}\label{eq:Linear}
G_\epsilon(\vx) = \sum_{j=1}^{N}\alpha_j \sigma(\vw_j^T\vx + \theta_j) \qquad \text{ with } N \in \N, \alpha_j, \theta_j \in \R, \vw_j \in \R^n,
\end{equation}
satisfying
\begin{equation*}
|G_\epsilon(\vx)-g(\vx)|<\epsilon \text{ for all } \vx\in \I{n}.
\end{equation*}
\end{theorem}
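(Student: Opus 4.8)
The plan is to run the classical Hahn--Banach duality argument due to Cybenko. Write $S \subseteq C(\I{n})$ for the linear span of the functions $\vx \mapsto \sigma(\vw^T\vx+\theta)$, $\vw\in\R^n$, $\theta\in\R$; since $\sigma$ is continuous, each such function is continuous on the compact cube $\I{n}$, so indeed $S\subseteq C(\I{n})$, and the functions $G_\epsilon$ in \autoref{eq:Linear} are precisely the elements of $S$. It then suffices to prove that $S$ is dense in $C(\I{n})$ for the supremum norm: given that, for any $g\in C(\I{n})$ and $\epsilon>0$ we simply choose $G_\epsilon\in S$ with $\norm{G_\epsilon-g}_\infty<\epsilon$, which is the assertion of the theorem.

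First I would assume, for contradiction, that the closure $\overline{S}$ is a proper closed subspace of $C(\I{n})$. By the Hahn--Banach theorem there exists a bounded linear functional $L$ on $C(\I{n})$ with $L\neq 0$ but $L|_{\overline{S}}=0$, hence $L|_{S}=0$. By the Riesz representation theorem for $C(\I{n})^*$, there is a finite signed regular Borel measure $\mu$ on $\I{n}$, not identically zero, with $L(h)=\int_{\I{n}}h(\vx)\,d\mu(\vx)$ for all $h\in C(\I{n})$. Evaluating $L$ on the generators of $S$ yields $\int_{\I{n}}\sigma(\vw^T\vx+\theta)\,d\mu(\vx)=0$ for every $\vw\in\R^n$ and $\theta\in\R$. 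Since $\sigma$ is discriminatory, \autoref{de:disc_function} forces $\mu\equiv 0$, contradicting $\mu\neq 0$. Therefore $\overline{S}=C(\I{n})$, which completes the proof.

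I expect the only delicate point to be aligning the hypotheses: \autoref{de:disc_function} must be read as covering the finite signed regular Borel measures produced by the Riesz representation theorem (equivalently, one applies the discriminatory property to the parts $\mu^+$ and $\mu^-$ of the Jordan decomposition), and one should check the measurability and integrability needed for the pairing $\int_{\I{n}}\sigma(\vw^T\vx+\theta)\,d\mu(\vx)$ to make sense --- which is again immediate from continuity of $\sigma$ on the compact set $\I{n}$. Beyond that there is no genuine obstacle; note in particular that the codomain restriction $\sigma:\R\to\R^+$ plays no role in the argument and is merely carried over from Cybenko's original formulation.
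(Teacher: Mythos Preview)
Your argument is correct and is precisely Cybenko's original Hahn--Banach/Riesz duality proof. Note, however, that the paper does not supply its own proof of this theorem: it is stated with the citation \cite{Cyb89} and used as a black box (the authors write ``we are able to recall Cybenko's universal approximation result''), so there is nothing in the paper to compare against beyond the fact that your write-up reproduces the cited source.
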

In the following we formulate and prove a generalization of Cybenko's result, which requires again some elementary definitions:

\begin{definition}[$m$-dimensional universal approximation functions]
\label{de:guat}
Let $f^{(1)}$, $f^{(2)}$, $\cdots$, $f^{(m)} \in C(\I{n})$, and
denote $\vfm^T := (f^{(1)}, f^{(2)}, ..., f^{(m)})$. Then we call
    \begin{equation}\label{eq:GeneralFunctions}
    	\mathcal{D}: = \mathcal{D}({\bf f}):=\set{ \vx \to \vwm^T \vfm(\vx) +\theta:
        \vwm \in \R^{m}, \theta \in \R}
    \end{equation}
the set of \emph{decision functions} associated to ${\bf f}^T$.
\end{definition}

\begin{theorem}[Generalized universal approximation theorem]\label{th:general}
Let $\sigma:\R \to \R$ be a continuous discriminatory function and assume that $\vfm : \I{n} \to \R^{m}$ is injective (this in particular means that $n \leq m$) and continuous.

Then for every $g\in C(\I{n})$ and every $\epsilon>0$ there exists some function
\begin{equation}\label{eq:general}
G_\epsilon^\vfm(\vx) := \sum_{j=1}^{N} \alpha_j\sigma\left(\vwm_j^T\vfm(\vx) +\theta_j\right) \text{ with } \alpha_j, \theta_j \in \R \text{ and } \vwm_j \in \R^{m}
\end{equation}
satisfying
\begin{equation*}
  \abs{G_\epsilon^\vfm(\vx) - g(\vx)} < \epsilon \text{ for all }\vx\in \I{n}.
\end{equation*}
\end{theorem}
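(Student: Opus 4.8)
The plan is to transport the approximation problem from $\I{n}$ to the image $K := \vfm(\I{n}) \subseteq \R^m$ and then to invoke Cybenko's theorem (\autoref{le:LinearApproximation}) in dimension $m$. First I would record that $K$ is compact, being the continuous image of the compact set $\I{n}$, and that $\vfm$, as a continuous bijection from the compact space $\I{n}$ onto the Hausdorff space $K$, is a homeomorphism onto $K$; in particular its inverse $\vfm^{-1}\colon K \to \I{n}$ is continuous, so $h := g \circ \vfm^{-1}$ is a well-defined continuous function on $K$. Choosing real numbers $a < b$ with $K \subseteq [a,b]^m$ and applying the Tietze extension theorem (legitimate since $K$ is closed in $[a,b]^m$), I would extend $h$ to $\tilde h \in C([a,b]^m)$, and then form $\tilde h \circ \phi$ for the affine bijection $\phi\colon \I{m} \to [a,b]^m$, $\phi(\vym) = a\,(1,\dots,1)^T + (b-a)\,\vym$; this is an element of $C(\I{m})$.

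Next I would apply \autoref{le:LinearApproximation} in dimension $m$ to $\tilde h \circ \phi$. A continuous discriminatory function is not a polynomial, so by the result of \cite{LesLinPinScho93} recalled after \autoref{de:disc_function} it is discriminatory with respect to measures on $\I{m}$ as well; hence for the given $\epsilon > 0$ there exist $N \in \N$, $\alpha_j, \tilde\theta_j \in \R$ and $\tilde\vwm_j \in \R^m$ with
\[
\abs{\,\sum_{j=1}^{N} \alpha_j\, \sigma\!\left(\tilde\vwm_j^T \vym + \tilde\theta_j\right) - (\tilde h \circ \phi)(\vym)\,} < \epsilon \qquad \text{for all } \vym \in \I{m}.
\]

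It then remains to undo the two changes of variables. Writing the affine inverse of $\phi$ as $\phi^{-1}(\vzm) = \lambda\,\vzm + c$ with $\lambda := 1/(b-a)$ and a suitable $c \in \R^m$, the substitution $\vym = \phi^{-1}(\vfm(\vx))$ turns each inner argument into $\tilde\vwm_j^T \phi^{-1}(\vfm(\vx)) + \tilde\theta_j = (\lambda\,\tilde\vwm_j)^T \vfm(\vx) + (c^T\tilde\vwm_j + \tilde\theta_j)$, which is exactly of the decision-function form $\vwm_j^T \vfm(\vx) + \theta_j$ with $\vwm_j := \lambda\,\tilde\vwm_j$ and $\theta_j := c^T\tilde\vwm_j + \tilde\theta_j$. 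Defining $G_\epsilon^\vfm$ as in \autoref{eq:general} with these parameters, and noting that $\vx \in \I{n}$ forces $\vfm(\vx) \in K \subseteq [a,b]^m$ and hence $\phi^{-1}(\vfm(\vx)) \in \I{m}$, the displayed estimate gives
\[
\abs{G_\epsilon^\vfm(\vx) - g(\vx)} = \abs{\,\sum_{j=1}^{N}\alpha_j\, \sigma\!\left(\tilde\vwm_j^T \phi^{-1}(\vfm(\vx)) + \tilde\theta_j\right) - (\tilde h\circ\phi)\!\left(\phi^{-1}(\vfm(\vx))\right)\,} < \epsilon,
\]
the first equality holding because $(\tilde h\circ\phi)(\phi^{-1}(\vfm(\vx))) = \tilde h(\vfm(\vx)) = h(\vfm(\vx)) = g(\vfm^{-1}(\vfm(\vx))) = g(\vx)$, using that $\vfm(\vx) \in K$ and that $\tilde h$ extends $h$.

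The only step that is not completely routine is the topological one — that a continuous injection of the compact cube $\I{n}$ into $\R^m$ is a homeomorphism onto its image, so that $\vfm^{-1}$ is continuous — but this follows immediately from the standard fact that a continuous bijection from a compact space to a Hausdorff space is a homeomorphism; the accompanying observation that such an injection can exist only when $n \le m$ is already noted in the theorem. Everything else — the Tietze extension, the affine rescaling, and the bookkeeping that an affine change of variables preserves the decision-function form $\vwm^T\vfm(\vx)+\theta$ — is standard. One minor remark: \autoref{le:LinearApproximation} is quoted for $\sigma$ with range in $\R^+$, but Cybenko's argument (Riesz representation together with Hahn--Banach) uses only the continuity and the discriminatory property of $\sigma$, so it carries over without change to $\sigma\colon\R\to\R$ and thus to the hypotheses assumed here.
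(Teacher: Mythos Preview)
Your proof is correct and follows essentially the same route as the paper: show $\vfm^{-1}$ is continuous by compactness, extend $g\circ\vfm^{-1}$ via Tietze, apply Cybenko in dimension $m$, and substitute $\vzm=\vfm(\vx)$. If anything you are slightly more careful than the paper, which applies \autoref{le:LinearApproximation} directly to the extension on $\R^m$ without the explicit affine rescaling to $\I{m}$ that you include; your bookkeeping closes that small gap.
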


\begin{proof}
We begin the proof by noting that since $\vx \to \vfm(\vx)$ is injective (The injectivity of the continuous function $\vfm$ follows from invariance of domain, see e.g. \cite[Theorem 4.3]{Dei85b}.), the inverse function on the range of $\vfm$ is well-defined, and we write $\vfm^{-1}:\vfm(\I{n}) \subseteq \R^m \to \I{n} \subseteq \R^n$.


The proof that $\vfm^{-1}$ is continuous relies on the fact that the domain $[0, 1]^n$ of $\vfm$ is compact, see for instance \cite[Chapter XI, Theorem 2.1]{Dug78}. Then applying the Tietze–Urysohn–Brouwer extension theorem (see \cite{Kel55}) to the continuous function $g \circ \vfm^{-1} : \vfm(\I{n}) \to \R$, this can be extended continuously to $\R^m$. This extension will be denoted by $g^*:\R^m \to \R$.

We apply \autoref{le:LinearApproximation} to conclude that there exist $\alpha_j, \theta_j \in \R$ and $ \vwm_j \in \R^{m}$, $j=1,\ldots,N$ such that
\begin{equation*}
G^*(\vzm) := \sum_{j=1}^{N} \alpha_j \sigma(\vwm_j^T \vzm +\theta_j) \text{ for all } \vzm \in \R^{m}, \theta_j \in \R,
\end{equation*}
which satisfies
\begin{equation}\label{eq:1}
\abs{G^*(\vzm)-g^*(\vzm)} <\epsilon \text{ for all } \vzm \in \R^m.
\end{equation}
Then, because $\vfm$ maps into $\R^m$ we conclude, in particular, that
\begin{equation*}
\begin{aligned}
G^*(\vfm(\vx)) &=\sum_{j=1}^{N} \alpha_j \sigma(\vwm_j^T \vfm(\vx)+\theta_j) \text{ and }
  \abs{G^*(\vfm(\vx)) - g(\vx)}
   =\abs{G^*(\vfm(\vx)) - g^*(\vfm(\vx))} <\epsilon.
\end{aligned}
\end{equation*}
Therefore $G_\epsilon^\vfm(\cdot):= G^*(\vfm(\cdot))$ satisfy the claimed assertions.
\end{proof}

\section{Universal approximation theorem with quadratic functions}
\label{sec:quad}
In the following we introduce several classes of universal approximation functions
as defined in \autoref{de:guat}.

First, we observe that the definition of decision functions from \autoref{de:guat} generalizes the affine linear decision functions from \cite{Cyb89} (see \autoref{le:LinearApproximation}):
\begin{example}[Affine linear decision functions] \label{ex:linear} Let $n=m$ and $f^{(i)}(\vx)=x_i$ for all $i=1,\ldots,n$. Then
the set of decision functions is given by
\begin{equation*}
   \mathcal{D}_l: = \set{ \vx \in \mathcal{I}_{n} \to \vw^T \vx + \theta : \vw  \in \R^n, \theta \in \R}.
\end{equation*}
Note, that in this case our notation gives $\vwm=\vw$ and $\vxm=\vx$.
\end{example}

In the following we consider different kinds of quadratic functions:
\begin{definition}[Quadratic decision functions] \label{de:quadratic}
Let $m=n+1$ and let $$A = U \text{diag}(\sigma_1,\ldots,\sigma_n) V^T \in \R^{n \times n}$$
the singular value decomposition of $A$. The functions
\begin{equation} \label{eq:quadratic_f}
 f^{(i)}(\vx) =x_i \text{ for }  i=1,\ldots,n \quad \text{ and } \quad
 f^{(n+1)}(\vx) = \vec{x}^T A \vec{x}
\end{equation}
define the \emph{quadratic decision functions} associated to $A$. The set of such
is denoted by
\begin{equation}\label{eq:GF_elliptic}
   \mathcal{D}: = \mathcal{D}(A) := \set{ \vx \to \vwm^T \vfm(\vx) +\theta:
        \vwm \in \R^{n+1}, \theta \in \R}.
\end{equation}
Then if
\begin{itemize}
\item for all $i$, $\sigma_i \geq 0$ or for all $i$, $\sigma_i \leq 0$, then $\mathcal{D}$ is called the set of {\bf elliptic} decision functions. In particular, if for all $i$, $\sigma_i = 1$ and $U=V=I$, the unitary matrix, then ${\mathcal D}$ is called the set of {\bf circular} decision functions.
\item If all but one $\sigma_i$ have the same sign, and are all not equal to $0$, then
      ${\mathcal D}$ is called the set of {\bf hyperbolic} decision functions, and
\item if all $\sigma_i \neq 0$ and more than two $\sigma_i$ have positive and negative signs, respectively, then
      ${\mathcal D}$ is called the set of {\bf ultrahyperbolic} decision functions.
\item If exactly one $\sigma_i=0$, and all others have the same sign, then ${\mathcal D}$ is called the set of {\bf parabolic} decision functions.
\end{itemize}
\end{definition}
\begin{remark}
\begin{itemize}
\item Let $A = 0$ be the null-matrix, then the quadratic decision functions  associated to  $A$ are the affine linear decision functions.
\item For every matrix $A \in \R^{n \times n}$ we have
      \begin{equation} \label{eq:linear_quadratic}
      \mathcal{D}_l \subseteq \mathcal{D}(A).
      \end{equation}
\item
Consider a quadratic decision function with
\begin{equation}\label{def:ai}
A=\text{diag}(a_1, a_2,...,a_n ) \quad \text{where } a_i \in \R, a_i \neq 0.
\end{equation}
Note that $m=n+1$.
If $w^{(m)} \neq 0$ we define $\zeta_i := -\frac{w^{(i)}}{2a_i^2 w^{(m)}}$, for $i=1,\ldots,n$,
$\zeta = (\zeta_1,\ldots,\zeta_n)^T$ and $\nu := w^{(m)}\zeta^TA\zeta -\theta$. Consequently, the decision function can be written as
\begin{equation}\label{eq:ex1_nontrivial}
\begin{aligned}
\vwm^T \vfm (\vx) + \theta &= w^{(m)}\vx^T A \vx + \sum_{i=1}^n w^{(i)} x_i + \theta\\
&= w^{(m)} \left( \vx^T A \vx - 2 \sum_{i=1}^n \zeta_i a_i^2 x_i + \zeta^T A \zeta \right) - w^{(m)}\zeta^T A \zeta +\theta\\
&= w^{(m)} \norm{\vx-\zeta}_A^2 - \nu.
\end{aligned}
\end{equation}
\end{itemize}
\end{remark}

Since the set of affine linear decision functions is always a subset of the quadratic decision functions
(see \autoref{eq:linear_quadratic}) the following result follows from an application of \autoref{th:general} taking into account that the function ${\bf f}$ defined in \autoref{eq:quadratic_f} is injective.
\begin{corollary}[Universal approximation of quadratic decision functions] \label{th:qdf} Let $m=n+1$, $A \in \R^{n\times n}$ and let ${\bf f}$ be as defined in \autoref{eq:quadratic_f}. Suppose that the discriminatory function $\sigma: \R \to \R_+$ is Lipschitz continuous with Lipschitz constant $\lambda$.
Then for every $g\in C(\I{n})$ and every $\epsilon>0$ there exists some $N \in \N$ and some function
\begin{equation}\label{eq:ge}
\vx \in \I{n} \to G_\epsilon^{\vfm}(\vx) := \sum_{j=1}^{N} \alpha_j\sigma\left(\vwm_j^T \vfm (\vx) + \theta_j\right) \text{ with } \alpha_j \in \R,\vwm_{j} \in \mathbb{R}^{m} \text { and } \theta_{j} \in \mathbb{R}
\end{equation}
satisfying
\begin{equation*}
  \abs{G_\epsilon^{\vfm}(\vx) - g(\vx)} < \epsilon \text{ for all } \vx \in \I{n}.
\end{equation*}
\end{corollary}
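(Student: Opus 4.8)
The plan is to deduce the corollary as a direct consequence of the generalized universal approximation theorem, \autoref{th:general}, once we have verified that the specific vector-valued function $\vfm$ built from \autoref{eq:quadratic_f} meets its hypotheses. Recall that \autoref{th:general} requires $\vfm:\I{n}\to\R^m$ to be continuous and injective with $n\le m$, and $\sigma:\R\to\R$ to be continuous and discriminatory; its conclusion then produces exactly a finite sum of the form appearing in \autoref{eq:ge} that approximates $g$ uniformly on $\I{n}$. So the whole argument reduces to three verifications plus one invocation.

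First I would check the dimension count: here $m=n+1$, so $n\le m$ holds trivially. Next, continuity of $\vfm$ is immediate since each coordinate $f^{(i)}(\vx)=x_i$ for $i=1,\dots,n$ and $f^{(n+1)}(\vx)=\vx^T A\vx$ is a polynomial in $\vx$, hence continuous on the compact cube $\I{n}$. For injectivity, observe that the first $n$ coordinates of $\vfm(\vx)$ already equal the components of $\vx$ itself; thus if $\vfm(\vx)=\vfm(\vy)$ then in particular $x_i=y_i$ for $i=1,\dots,n$, i.e. $\vx=\vy$. Equivalently, $\vfm$ is the graph embedding $\vx\mapsto(\vx,\vx^T A\vx)$ of the quadratic polynomial $\vx\mapsto\vx^T A\vx$, which is manifestly one-to-one; this is the same observation that underlies the inclusion $\mathcal{D}_l\subseteq\mathcal{D}(A)$ in \autoref{eq:linear_quadratic}.

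Then I would simply apply \autoref{th:general} with this $\vfm$ and the given $\sigma$ — which is continuous because it is assumed Lipschitz continuous with constant $\lambda$, and discriminatory by hypothesis — to obtain $N\in\N$, coefficients $\alpha_j,\theta_j\in\R$, and weight vectors $\vwm_j\in\R^{m}$ so that $G_\epsilon^{\vfm}(\vx)=\sum_{j=1}^N\alpha_j\sigma\bigl(\vwm_j^T\vfm(\vx)+\theta_j\bigr)$ satisfies $\abs{G_\epsilon^{\vfm}(\vx)-g(\vx)}<\epsilon$ for all $\vx\in\I{n}$. Spelling out $\vwm_j^T\vfm(\vx)+\theta_j = w_j^{(n+1)}\,\vx^T A\vx + \sum_{i=1}^n w_j^{(i)}x_i + \theta_j$ shows this is exactly a sum of quadratic neurons, which is the claimed conclusion.

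Since every step is a routine verification, I do not expect a genuine obstacle; the one point worth flagging in a remark is that the Lipschitz hypothesis on $\sigma$ is not actually needed for this existence statement — mere continuity suffices to invoke \autoref{th:general} — and is imposed here only because the quantitative convergence-rate results later in the section rely on it, so retaining it keeps the standing assumptions on $\sigma$ uniform throughout.
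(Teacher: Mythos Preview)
Your proposal is correct and matches the paper's own argument: the paper states that the corollary ``follows from an application of \autoref{th:general} taking into account that the function $\vfm$ defined in \autoref{eq:quadratic_f} is injective,'' which is exactly the verification-plus-invocation you carry out. One minor discrepancy worth noting is that the paper asserts the Lipschitz hypothesis on $\sigma$ \emph{is} needed in the proof, whereas you (correctly, given that \autoref{th:general} requires only continuity) observe that it is not used here and is retained only for later quantitative results.
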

Note that the assumption that $\sigma$ is Lipschitz continuous is needed in the proof of \autoref{th:qdf}.

As it is presented here, the universal approximation \autoref{le:LinearApproximation} and
\autoref{th:qdf} provide the existence of an approximating sequence for increasing $N$. The proof is not quantitative and only applicable for functions $g\in C(\I{n})$, that is for functions defined on the $n$-dimensional unit cube. The following section provides convergence rates results for the best approximation function with $N$ coefficients. On a technical level, it allows for approximating functions $g \in L^1(\R^n)$, that is in free space.

\section{Convergence rates for universal approximation of circular decision functions}
\label{sec:rates}
In the following we prove convergence rates of circular decision functions of the form $G_\epsilon^{\bf f}$ in the $\mathcal{L}^1$-norm. We recall that by construction, circular decision functions form a superset of the affine linear decision functions, and this generalization allows for more efficient approximations.

We follow the proof of convergence rates results from \cite{ShaCloCoi18} for affine linear decision functions and extend it to \emph{circular decision functions} in the following way:
\begin{enumerate}
\item We construct a wavelet frame from the set of circular decision functions $\mathcal{D}(I)$;
\item We apply general convergence rates for wavelet expansions to prove convergence rates of the
      {\bf best approximation} with respect to the circular frame of an arbitrary function $g \in \mathcal{L}^1(\R^n)$.
\end{enumerate}
For the sake of simplicity of presentation we avoid a presentation of general elliptic decision functions.

%
%

\begin{definition}[Circular wavelet frame] \label{de:cf}
	Let $r > 0$ and $\sigma$ is a discriminatory function as defined in \autoref{de:disc_function} such that $\int_{\R^n}\sigma(r^2-\|\vx\|^2)d\vx < \infty$. Then let
	\begin{equation}\label{eq:varphi}
		\vx \in \R^n \to \varphi(\vx):= C_d \sigma(r^2 - x_1^2 - x_2^2 - \dots-x_n^2),
	\end{equation}
	where $C_d$ is a normalizing constant such that $\int_{\R^n}\varphi(\vx)d\vx=1$.\\	
	Then we define for all $\vx,\vy \in \R^n$ and $k \in \Z$
	\begin{equation}\label{eq:S}
		\begin{aligned}
			S_k(\vx,\vy) := 2^k \varphi(2^\frac{k}{n}(\vx-\vy)) \text{ and }
			\psi_{k,\vy}(\vx) := 2^{-\frac{k}{2}} (S_k(\vx,\vy)- S_{k-1}(\vx,\vy)).
		\end{aligned}
	\end{equation}
\end{definition}

\begin{remark} We abbreviate $\psi:=\psi_{0,0}$. With this notation we see that
	\begin{align*}
		\psi_{k,\vy}(\vx) &=  2^{-\frac{k}{2}}(S_k(\vx,\vy)- S_{k-1}(\vx,\vy)) \\
		&= 2^{-\frac{k}{2}}\left(2^k\varphi(2^{\frac{k}{n}}(\vx-\vy)) - 2^{k-1}\varphi(2^{\frac{k-1}{n}}(\vx-\vy))\right)\\
		&= 2^{\frac{k}{2}} \left( \varphi \left(2^{\frac{k}{n}}\vx-2^{\frac{k}{n}}\vy\right) - 2^{-1} \varphi\left(2^{-\frac{1}{n}} \left(2^{\frac{k}{n}}\vx - 2^{\frac{k}{n}}\vy\right)\right)\right)\\
		&= 2^{\frac{k}{2}} \psi(2^{\frac{k}{n}}(\vx-\vy)) \qquad \text{ for all } k \in \Z, \vy \in \R^n.
	\end{align*}
\end{remark}
For proving that ${\mathcal F}$ satisfies the frame properties (see for instance \cite{Chr16})
and approximation properties of the best approximation with respect to the frames expansion we apply some general results from the literature, which are reviewed in the \autoref{sec:appendix}.

\subsection{Convergence rates of nets of circular decision function}
We show that the circular wavelet frame is an \emph{Approximation of the identity} (AtI (see \autoref{def:wavelet})). For this purpose we use the following basic inequality.
\begin{lemma}\label{lem:hessian}
Let $h: \R^n \to \R$ be a twice differentiable function, which can be expressed in the following way:
\begin{equation*}
   h(\vx)=h_s(\norm{\vx}^2) \text{ for all } \vx \in \R^n.
\end{equation*}
Then the spectral norm of the Hessian of $h$ can be estimated as follows:\footnote{In the following $\nabla$ and $\nabla^2$ (without subscripts) always denote derivatives with respect to an $n$-dimensional variable such as $\vx$. ${}'$ and ${}''$ denotes derivatives of a one-dimensional function.}
\begin{equation} \label{eq:hessian}
  \norm{\nabla^2 h(\vx)} \leq \max \set{\abs{4\norm{\vx}^2 h_s''(\norm{\vx}^2)+ 2h_s'(\norm{\vx}^2)},\abs{2h_s'(\norm{\vx}^2)}}.
\end{equation}
\end{lemma}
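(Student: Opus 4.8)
The plan is to compute the Hessian of $h$ explicitly using the chain rule and then bound its spectral norm by exploiting the special radial-type structure $h(\vx) = h_s(\norm{\vx}^2)$. Writing $r = \norm{\vx}^2 = \sum_i x_i^2$, I would first differentiate: $\partial_i h(\vx) = 2 x_i\, h_s'(r)$, and then differentiate again to get
\begin{equation*}
\partial_i \partial_j h(\vx) = 4 x_i x_j\, h_s''(r) + 2\delta_{ij}\, h_s'(r).
\end{equation*}
Hence in matrix form $\nabla^2 h(\vx) = 4 h_s''(r)\, \vx \vx^T + 2 h_s'(r)\, I$, where $I$ is the $n \times n$ identity.

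With this closed form in hand, the key observation is that $\nabla^2 h(\vx)$ is a rank-one perturbation of a multiple of the identity, so its eigenvalues are completely explicit. The matrix $\vx \vx^T$ has eigenvalue $\norm{\vx}^2$ on the one-dimensional subspace spanned by $\vx$, and eigenvalue $0$ on the orthogonal complement $\vx^\perp$. Therefore the eigenvalues of $\nabla^2 h(\vx)$ are $4 \norm{\vx}^2 h_s''(r) + 2 h_s'(r)$ (simple, eigenvector $\vx$) and $2 h_s'(r)$ (with multiplicity $n-1$, on $\vx^\perp$). The spectral norm of a symmetric matrix is the maximum of the absolute values of its eigenvalues, which gives exactly
\begin{equation*}
\norm{\nabla^2 h(\vx)} = \max\set{\abs{4\norm{\vx}^2 h_s''(\norm{\vx}^2) + 2 h_s'(\norm{\vx}^2)}, \abs{2 h_s'(\norm{\vx}^2)}},
\end{equation*}
which is even stronger than the claimed inequality (equality rather than $\leq$), so the bound follows immediately. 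One should handle the degenerate case $\vx = 0$ separately, where $\nabla^2 h(0) = 2 h_s'(0) I$ and both expressions inside the maximum coincide.

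I do not anticipate a serious obstacle here; this is essentially a routine computation. The only minor care needed is to justify interchanging the order of differentiation (guaranteed by the assumed twice-differentiability, via Schwarz's/Clairaut's theorem, so $\nabla^2 h$ is symmetric) and to be careful about the chain rule when $h_s$ is composed with the non-smooth-looking but actually smooth function $\vx \mapsto \norm{\vx}^2$ (this is a polynomial, so no issue — the potential non-differentiability of $\vx \mapsto \norm{\vx}$ never enters because we use the squared norm throughout). If one wishes to avoid the explicit eigenvalue argument, an alternative is to bound the spectral norm by the triangle inequality $\norm{\nabla^2 h(\vx)} \leq 4\abs{h_s''(r)}\, \norm{\vx \vx^T} + 2\abs{h_s'(r)} = 4 \norm{\vx}^2 \abs{h_s''(r)} + 2\abs{h_s'(r)}$, but this is weaker than the stated bound, so the eigenvalue computation is the right route.
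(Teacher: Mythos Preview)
Your proposal is correct and follows essentially the same approach as the paper: compute the Hessian explicitly as $4h_s''(\norm{\vx}^2)\,\vx\vx^T + 2h_s'(\norm{\vx}^2)\,I$, use that $\vx\vx^T$ is rank one with eigenvalues $\norm{\vx}^2$ and $0$, and read off the eigenvalues of the Hessian. Your observation that equality (not just $\leq$) holds, and your handling of the $\vx=0$ case, are small refinements over the paper's version.
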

\begin{proof}
Since $\nabla^2 h(\vx)$ is a symmetric matrix, its operator norm is equal to its spectral radius, namely the largest absolute value of an eigenvalue. By routine calculation we can see that
\[
\nabla_{x_i x_j}h(\vx)=4x_ix_jh_s''(\norm{\vx}^2) + 2\delta_{ij}h_s'(\norm{\vx}^2).
\]
Let $C =(x_ix_j)$ and $I$ the identity matrix, then $\lambda$ is an eigenvalue with eigenvector $\vz$ of $\nabla^2 h(\vx)$ if and only if
\begin{equation*}
	4 h_s''(\norm{\vec{x}}^2) C \vz = (-2 h_s'(\norm{\vx}^2) + \lambda)\vz.
\end{equation*}
Or in other words $\frac{- 2 h_s'(\norm{\vx}^2) + \lambda}{4 h_s''(\norm{x}^2)}$ is an eigenvalue of $C$. Moreover, $C = \vx \vx^T$ is a rank one matrix and thus the spectral values are $0$ with multiplicity $(n-1)$ and $\norm{\vx}^2$. This in turn shows that the eigenvalues of the Hessian
are $+2h_s'(\norm{\vx}^2)$ (with multiplicity $n-1$) and $4\norm{\vx}^2 h_s''(\norm{\vx}^2)+2h_s'(\norm{\vx}^2)$, which proves \autoref{eq:hessian}.

\end{proof}

In the following lemma, we will prove that the kernels $(S_k)_{k \in \Z}$ are an AtI (Approximation to the identity \cite{DenHan09}). This is a streamlined assumption from Definition 3.4 in the book \cite{DenHan09}.

\begin{lemma}\label{le:SConditions}
Suppose that the activation function $\sigma:\R \to \R_+$ is monotonically increasing
and satisfies for the $i$-th derivative ($i=0,1,2$)
\begin{equation} \label{eq:sigma}
   \abs{\sigma^i (r^2-t^2)} \leq C_\sigma (1+\abs{t}^n)^{-1-\frac{2i+1}{n}} \text{ for all } t \in \R,
\end{equation}
where $r$ is the same as in \autoref{de:cf}. Then the kernels $(S_k)_{k \in \Z}$ as defined in \autoref{eq:S} form an AtI as defined in \autoref{def:wavelet} that also satisfy \autoref{eq:DoubleLipschitzCondition}.
\end{lemma}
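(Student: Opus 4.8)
The goal is to verify that the kernels $(S_k)_{k \in \Z}$ defined in \autoref{eq:S} satisfy the axioms of an AtI (the streamlined version of Definition 3.4 from \cite{DenHan09} recorded in \autoref{def:wavelet} in the appendix), together with the extra Lipschitz-type condition \autoref{eq:DoubleLipschitzCondition}. Since $S_k(\vx,\vy) = 2^k \varphi(2^{k/n}(\vx-\vy))$ with $\varphi(\vx) = C_d\,\sigma(r^2-\norm{\vx}^2)$, all of these conditions are radial translation-dilation statements about the single profile $\varphi$, so the plan is to reduce everything to pointwise estimates on $\varphi$, $\nabla\varphi$ and $\nabla^2\varphi$ and then track the scaling factors $2^k$ and $2^{k/n}$ through each axiom.

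First I would record the size and smoothness bounds on $\varphi$ that follow directly from the hypothesis \autoref{eq:sigma}. The case $i=0$ gives $\abs{\varphi(\vx)} \le C_d C_\sigma (1+\norm{\vx}^n)^{-1-1/n}$, which is the decay estimate needed for the AtI size condition. For the gradient and Hessian I would use the chain rule together with \autoref{lem:hessian}: writing $h = \varphi$ as $h_s(\norm{\vx}^2)$ with $h_s(t) = C_d\sigma(r^2-t)$, one has $h_s'(t) = -C_d\sigma'(r^2-t)$ and $h_s''(t) = C_d\sigma''(r^2-t)$, so \autoref{lem:hessian} and the analogous (easier) computation $\nabla\varphi(\vx) = 2\vx\, h_s'(\norm{\vx}^2)$ give, after substituting the bounds \autoref{eq:sigma} with $t = \norm{\vx}$,
\begin{equation*}
  \abs{\nabla\varphi(\vx)} \le C\,(1+\norm{\vx}^n)^{-1-3/n},
  \qquad
  \norm{\nabla^2\varphi(\vx)} \le C\,(1+\norm{\vx}^n)^{-1-5/n}.
\end{equation*}
The exponents $-1-(2i+1)/n$ in \autoref{eq:sigma} are chosen precisely so that each extra derivative buys one more power of $(1+\norm{\vx}^n)^{-2/n} \sim (1+\norm{\vx})^{-2}$ of decay, matching the $\norm{\vx-\vy}$-regularity gain required by the AtI axioms.

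Next I would check each AtI axiom in turn. The size condition $\abs{S_k(\vx,\vy)} \lesssim 2^k (1+2^{k/n}\norm{\vx-\vy})^{-(n+1)}$ follows by plugging the $i=0$ bound into $S_k$ and noting $(1+\norm{\vz}^n)^{-1-1/n} \le (1+\norm{\vz})^{-(n+1)}$. The smoothness/Hölder conditions in $\vx$ and (by symmetry of the translation structure) in $\vy$ follow from the mean value theorem applied along the segment from $\vx$ to $\vx'$, using the gradient bound: $\abs{S_k(\vx,\vy)-S_k(\vx',\vy)} \le 2^k 2^{k/n}\norm{\vx-\vx'}\sup \abs{\nabla\varphi}$, and one distributes the decay factor to get the required form with a $\norm{\vx-\vx'}$ to a suitable power; the second-order condition (double Hölder / the condition \autoref{eq:DoubleLipschitzCondition}) is handled identically but invoking the Hessian bound. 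The cancellation/normalization axiom $\int_{\R^n} S_k(\vx,\vy)\,d\vx = 1$ is exactly the normalization $\int\varphi = 1$ built into \autoref{de:cf}, invariant under the measure-preserving-up-to-the-$2^k$-factor change of variables; and the vanishing-moment property for $\psi_{k,\vy}$ follows because $\psi = \psi_{0,0}$ is a difference $S_0 - S_{-1}$ of two kernels each with integral $1$, hence $\int\psi = 0$. Monotonicity of $\sigma$ enters to guarantee $\varphi \ge 0$ (so $C_d > 0$ is well defined) and that $\varphi$ is genuinely a bump peaked at the origin, which is what makes the $S_k$ behave like an approximate identity rather than merely satisfying the formal size bounds.

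The main obstacle I anticipate is bookkeeping rather than conceptual: one must be careful that the dilation in \autoref{eq:S} is by $2^{k/n}$ in space and $2^k$ in amplitude (so that $S_k$ is $L^1$-normalized in $\R^n$), and that the decay exponent $n+1$ demanded by the AtI framework in dimension $n$ is matched by the exponent $1+1/n$ coming out of \autoref{eq:sigma} — i.e. one has to confirm $n(1+1/n) = n+1$ and, for the $i$-th derivative, that $n\big(1+\tfrac{2i+1}{n}\big) = n+2i+1$ gives exactly the $2i+1$ orders of extra decay needed to absorb the $i$ factors of $2^{k/n}$ produced by differentiating and still close the AtI regularity estimate with a positive Hölder exponent. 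Verifying that the constants are uniform in $k$ (they are, since every estimate reduces to the fixed profile $\varphi$) is the last routine point. Once these scalings are pinned down, each axiom is a one- or two-line substitution, and the extra condition \autoref{eq:DoubleLipschitzCondition} is just the $i=2$ case packaged in the mixed-increment form, proved by applying the Hessian bound to the second difference $S_k(\vx,\vy) - S_k(\vx',\vy) - S_k(\vx,\vy') + S_k(\vx',\vy')$.
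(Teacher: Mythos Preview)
Your overall strategy matches the paper's exactly: reduce to bounds on $\varphi$, $\nabla\varphi$, $\nabla^2\varphi$ via \autoref{eq:sigma} and \autoref{lem:hessian}, then scale. But two points need repair. The decay exponents you quote for the derivatives are too strong: since $\nabla\varphi(\vx)=-2C_d\,\vx\,\sigma'(r^2-\norm{\vx}^2)$, the chain-rule factor $\norm{\vx}\le(1+\norm{\vx}^n)^{1/n}$ costs one power, so $\abs{\nabla\varphi(\vx)}\le C(1+\norm{\vx}^n)^{-1-2/n}$ (not $-1-3/n$), and similarly \autoref{lem:hessian} together with $\norm{\vx}^2\le(1+\norm{\vx}^n)^{2/n}$ gives only $\norm{\nabla^2\varphi(\vx)}\le C(1+\norm{\vx}^n)^{-1-3/n}$ (not $-1-5/n$). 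Each derivative thus gains one factor of $(1+\norm{\vx}^n)^{-1/n}$, not two as you assert; this is still exactly enough for $\epsilon=\zeta=1/n$ in \autoref{def:wavelet}, which is what the paper obtains.

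More seriously, the step you describe as ``distributing the decay factor'' after the mean value theorem is not bookkeeping. The mean value inequality produces a bound involving $(2^{-k}+\norm{\vz-\vy}^n)^{-1-2/n}$ for some $\vz$ on the segment from $\vx$ to $\vx'$, whereas \autoref{it2} demands the denominator $2^{-k}+\norm{\vx-\vy}^n$. To replace $\vz$ by $\vx$ you must actually \emph{use} the constraint \autoref{eq:rest_item2} (respectively \autoref{eq:rest_item3} for \autoref{eq:DoubleLipschitzCondition}), which forces $\vx'$ and $\vy'$ to stay close enough to $\vx$ and $\vy$ that the denominator changes only by a fixed multiplicative constant; the paper isolates this step as \autoref{lem:kk} (inequalities \autoref{eq:kk} and \autoref{eq:kk1}) and it is where the specific choices $C_A=2^{-n}$, $\tilde C_A=3^{-n}$ enter. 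Your plan never mentions these constraints, and without them the supremum over the segment is uncontrolled. (The vanishing-moment remark on $\psi$ is superfluous here---it is not part of the statement being proved.)
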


\begin{proof}
We verify the three conditions from \autoref{def:wavelet} as well as \autoref{eq:DoubleLipschitzCondition}.
First of all, we note that
\begin{equation} \label{eq:sigma1}
   \abs{\sigma^i (r^2 - \norm{\vx}^2)} \leq C_\sigma (1+\norm{\vx}^n)^{-1-\frac{2i+1}{n}} \text{ for all } \vx \in \R^n.
\end{equation}
\begin{itemize}
  \item \textbf{Verification of \autoref{it1} in \autoref{def:wavelet}:}
   \autoref{eq:varphi} and \autoref{eq:sigma} imply that
  \begin{equation}\label{eq:VarphiBound}
   0 \leq \varphi(\vx-\vy) = C_d \sigma(r^2-\norm{\vx-\vy}^2) \leq C_\sigma C_d (1+\norm{\vx-\vy}^n)^{-1-\frac{1}{n}} \text{ for all } \vx,\vy \in \R^n.
  \end{equation}
  Therefore
  \begin{equation*} \begin{aligned}
    S_k(\vx,\vy)&= 2^k \varphi(2^\frac{k}{n}(\vx-\vy))
    \leq C_\sigma C_d 2^{k}(1+2^k \norm{\vx-\vy}^n)^{-1-\frac{1}{n}} \\
    &= C_\sigma C_d 2^{-\frac{k}{n}}(2^{-k}+ \norm{\vx-\vy}^n)^{-1-\frac{1}{n}}.
  \end{aligned} \end{equation*}
  Thus \autoref{it1} in \autoref{def:wavelet} holds with $\epsilon=1/n$ and $C_\rho=1$ and $C=C_d C_\sigma$.
  \item \textbf{Verification of \autoref{it2} in \autoref{def:wavelet} with $C_\rho=1$ and $C_A=2^{-n}$:}
      Because $\sigma$ is monotonically increasing it follows from \autoref{eq:varphi} and the fact that $S_{0}(\vec{x}, \vec{y})=\varphi(\vec{x}-\vec{y})$  (see \autoref{eq:S}) and \autoref{de:cf} that
      $$F_\vy(\vx):= \norm{\nabla_\vx (S_0(\vx,\vy))} = 2 C_d \norm{\vx-\vy}\sigma'(r^2-\norm{\vx-\vy}^2) \text{ for all } \vy \in \R^n.$$
      Then \autoref{eq:sigma1} implies that
      \begin{equation*}
      \begin{aligned}
      F_\vy(\vx) &\leq 2C_d C_\sigma (1+\norm{\vx-\vy}^n)^{-1-\frac{3}{n}} \norm{\vx-\vy} \leq
      2C_d C_\sigma (1+\norm{\vx-\vy}^n)^{-1-\frac{3}{n}} (1+\norm{\vx-\vy}^n)^{\frac{1}{n}}\\
      & = 2C_d C_\sigma (1+\norm{\vx-\vy}^n)^{-1-\frac{2}{n}}.
      \end{aligned}
  \end{equation*}
      From the definition of $S_k(\vx,\vy)$, it follows
      \begin{equation} \label{eq:zw}
      \begin{aligned}
      \norm{\nabla_\vx (S_k(\vx,\vy))} &= \norm{\nabla_\vx (2^k\varphi(2^{\frac{k}{n}}(\vx-\vy)))} =
       2^k \norm{\nabla_\vx S_0 (2^{\frac{k}{n}}\vx,2^{\frac{k}{n}}\vy)} = 2^{k+\frac{k}{n}}F_{2^{\frac{k}{n}}\vy} (2^{\frac{k}{n}}\vx)\\
      &\leq 2^{-\frac{k}{n}} C_d C_\sigma (2^{-k}+\norm{\vx-\vy}^n)^{-1-\frac{2}{n}}.
      \end{aligned}
      \end{equation}
  From the mean value theorem it therefore follows from \autoref{eq:zw} and \autoref{eq:kk} that
      \begin{equation} \label{eq:help}
      \begin{aligned}
      \frac{\abs{S_k(\vx,\vy) - S_k(\vx',\vy)}}{\norm{\vx-\vx'}} &\leq
      \max_{\set{\vz = t \vx' + (1-t)\vx:t \in [0,1]}} \norm{\nabla_\vx(S_k(\vz,\vy))}\\
      &\leq 2^{-\frac{k}{n}} C_d C_\sigma \max_{\set{\vz = \vx + t (\vx' -\vx):t \in [0,1]}} (2^{-k}+\norm{\vz-\vy}^n)^{-1-\frac{2}{n}} \\
      &= 2^{-\frac{k}{n}} C_d C_\sigma \left(2^{-k} + \min_{\set{\vz = \vx + t (\vx' -\vx):t \in [0,1]}} \norm{\vz-\vy}^n \right)^{-1-\frac{2}{n}}.
      \end{aligned}
      \end{equation}
      Then application of \autoref{eq:kk} and noting that $\frac{1-2^{-n}}{2^{-n}} \geq 1$
      gives
      \begin{equation*}
      	\begin{aligned}
      \frac{\abs{S_k(\vx,\vy) - S_k(\vx',\vy)}}{\norm{\vx-\vx'}} &\leq
      2^{-\frac{k}{n}} C_d C_\sigma \left((1-2^{-n}) 2^{-k}+2^{-n}\norm{\vx-\vy}^n  \right)^{-1-\frac{2}{n}}\\ &\leq
      2^{-\frac{k}{n}} \left(2^{-n}\right)^{-1-\frac{2}{n}}
      C_d C_\sigma \left(\frac{(1-2^{-n}) }{2^{-n}}2^{-k} + \norm{\vx-\vy}^n  \right)^{-1-\frac{2}{n}}\\&\leq
      2^{-\frac{k}{n}} 2^{n+2}
      C_d C_\sigma \left(2^{-k} + \norm{\vx-\vy}^n  \right)^{-1-\frac{2}{n}}.
      	\end{aligned}
      \end{equation*}
      Therefore \autoref{it2} is satisfied with $C_\rho=1$, $\zeta=1/n$, $\epsilon=1/n$ and
      $C=2^{n+2} C_d C_\sigma $.

  \item \textbf{Verification of \autoref{it4} in \autoref{def:wavelet}:}
      From the definition of $S_k$ (see \autoref{eq:S}) it follows that for every $k\in \Z$ and $\vy \in \R^n$
  \begin{equation*}
  1 = \int_{\R^n}S_k(\vx,\vy)d\vx = \int_{\R^n} 2^k \varphi(2^\frac{k}{n}(\vx-\vy)) d\vx.
  \end{equation*}
\item \textbf{Verification of the double Lipschitz condition \autoref{eq:DoubleLipschitzCondition} in \autoref{def:wavelet}:}
By using the integral version of the mean value theorem, we have
\begin{align*}
  &S_k(\vx,\vy) - S_k(\vx',\vy) - S_k(\vx,\vy') + S_k(\vx',\vy') \\
  &= S_k(\vx,\vy) - S_k(\vx',\vy) - (S_k(\vx,\vy') - S_k(\vx',\vy'))\\
  &= \int_{0}^{1} \langle \nabla_\vy S_k(\vx,\vy' + t(\vy-\vy')), \vy-\vy' \rangle dt - \int_{0}^{1} \langle \nabla_\vy S_k(\vx',\vy' + t(\vy-\vy')), \vy-\vy' \rangle dt\\
  &= \int_{0}^{1}\int_{0}^{1} \langle \nabla_{\vx, \vy} S_k(\vx' + s(\vx-\vx'),\vy' + t(\vy-\vy'))(\vx-\vx'), \vy-\vy' \rangle dt ds.
\end{align*}
Following this identity, we get
\begin{equation} \label{eq:lip2}
  \begin{aligned}
   \frac{\abs{S_k(\vx,\vy) - S_k(\vx',\vy) + S_k(\vx,\vy') - S_k(\vx',\vy')}}
         {\norm{\vx-\vx'} \norm{\vy-\vy'}}
       \leq &  \max_{\interval}
       \norm{\nabla_\vy \left(\frac{S_k(\vx,\vz) - S_k(\vx',\vz)}{\norm{\vx-\vx'}}\right)}\\
    \leq & \max_{\quader} \norm{\nabla^2_{\vx\vy}S_k(\vz',\vz)},
  \end{aligned}
\end{equation}
where
\begin{equation*}
  \begin{aligned}
       \interval &:= \set{\vz=t\vy+(1-t)\vy':t \in [0,1]}, \\
       \quader &:= \set{(\vz=t_{\vy}\vy+(1-t_{\vy})\vy',\vz'=t_{\vx}\vx+(1-t_{\vx})\vx'):
                      t_{\vy} \in [0,1],t_{\vx} \in [0,1]},
  \end{aligned}
\end{equation*}
and $\norm{\nabla^2_{\vx\vy}S_k(\vz',\vz)}$ denotes again the spectral norm of $\nabla^2_{\vx\vy}S_k(\vz',\vz)$.

Now, we estimate the right hand side of \autoref{eq:lip2}: From the definition of $S_k$, \autoref{eq:S}, and the definition of $\varphi$, \autoref{eq:varphi}, it follows with the abbreviation
$\vec{\omega} = 2^{\frac{k}{n}}(\vz'-\vz)$:
\begin{equation*}
\begin{aligned}
\norm{\nabla^2_{\vx\vy}S_k(\vz',\vz)} &=
2^{k}\norm{\nabla^2_{\vx\vy}(\varphi \circ (2^{\frac{k}{n}}\cdot))(\vz'-\vz))}
= 2^{k+2\frac{k}{n}}\norm{\nabla^2 \varphi (\vec{\omega}))}.
\end{aligned}
\end{equation*}
Applications of \autoref{lem:hessian} with $\vx \to h(\vx)=\varphi(\vx)$ and $t \to h_s(t) = C_d \sigma(r^2-t)$ shows that (note that $h_s'(t)=-C_d \sigma'(r^2-t)$)
  \begin{equation}\label{eq:last}
   \begin{aligned}
   \norm{\nabla^2 \varphi (\vec{\omega})}
  \leq & C_d  \max \set{\abs{4\norm{\vec{\omega}}^2 \sigma''(r^2 - \norm{\vec{\omega}}^2) -  2\sigma'(r^2 - \norm{\vec{\omega}}^2)},\abs{2\sigma'(r^2 - \norm{\vec{\omega}}^2)}}\\
  \leq & 2^2 C_d  \max \set{2\norm{\vec{\omega}}^2 \abs{\sigma''(r^2 - \norm{\vec{\omega}}^2)},\abs{\sigma'(r^2 - \norm{\vec{\omega}}^2)}}.
      \end{aligned}
 \end{equation}
Thus from \autoref{eq:sigma1} it follows that
\begin{equation*}
	\begin{aligned}
		\norm{\nabla^2_{\vx\vy}S_k(\vz',\vz)}
		\leq & 2^2 2^{k+2\frac{k}{n}}C_d C_\sigma  \max \set{2\norm{\vec{\omega}}^2 (1+\norm{\vec{\omega}}^n)^{-1-\frac{5}{n}}, (1+\norm{\vec{\omega}}^n)^{-1-\frac{3}{n}} }\\
		\leq & 2^3 2^{k+2\frac{k}{n}}C_d C_\sigma  (1+\norm{\vec{\omega}}^n)^{-1-\frac{3}{n}}\\
		\leq & 2^{-\frac{k}{n}} 2^3 C_d C_\sigma  (2^{-k} + \norm{ \vz'-\vz}^n)^{-1-\frac{3}{n}} .
	\end{aligned}
\end{equation*}
In the next step we note that from \autoref{eq:kk1} it follows that
\begin{equation}\label{eq:min}
\norm{\vz'-\vz}^n \geq 3^{-n} \norm{\vx-\vy}^n - 3^{-n} 2^{1-k}.
\end{equation}

Thus we get because $\frac{1-3^{-n}2}{3^{-n}} \geq 1$
\begin{equation*}
    \begin{aligned}
    ~ & \frac{\abs{S_k(\vx,\vy) - S_k(\vx',\vy) - S_k(\vx,\vy')  + S_k(\vx',\vy')}}
         {\norm{\vx-\vx'} \norm{\vy-\vy'}} \\
    \leq & 2^{-\frac{k}{n}} 2^3 C_d C_\sigma \left((1-3^{-n}2) 2^{-k} + 3^{-n}\norm{\vx-\vy}^n \right)^{-1-\frac{3}{n}}\\
\leq & 2^{-\frac{k}{n}} 2^3 (3^{-n})^{-1-\frac{3}{n}} C_d C_\sigma
     \left(\frac{1-3^{-n}2}{3^{-n}} 2^{-k} + \norm{\vx-\vy}^n \right)^{-1-\frac{3}{n}}\\
\leq & 2^{-\frac{k}{n}} 2^3 3^{n+3} C_d C_\sigma
     \left(2^{-k} + \norm{\vx-\vy}^n \right)^{-1-\frac{3}{n}}.
    \end{aligned}
\end{equation*}
Therefore \autoref{it4} is satisfied with $C_\rho=1$,
$\tilde{C} = 2^3 3^{n+3} C_d C_\sigma$, $\zeta=1/n$, and $\epsilon=1/n$.
  \end{itemize}
\end{proof}

\begin{remark}
One typical activation function which satisfies \autoref{eq:sigma} is the sigmoid function. For different orders of its derivative, \autoref{fig:SigmoidInequality} shows this inequality in logarithmic coordinates. The orange curve corresponds to the left hand side of \autoref{eq:sigma}, and the blue curve corresponds to the right hand side of it.
\begin{figure}[H]%
    \protect
    \centering
    \subfloat[when $i=0$]{\includegraphics[width=7cm]{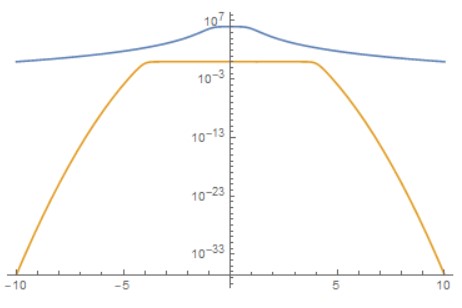} }%
    \qquad
    \subfloat[when $i=1$]{\includegraphics[width=7cm]{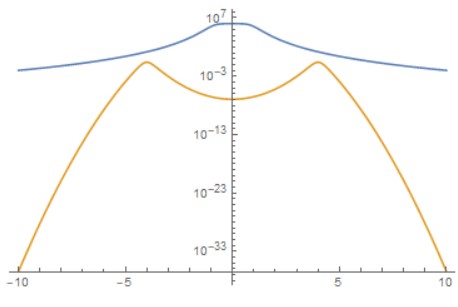} }%
    \qquad
    \subfloat[when $i=2$]{\includegraphics[width=7cm]{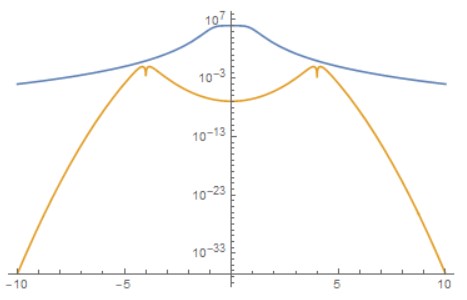} }%
    \caption{In \autoref{eq:sigma}, we take $\sigma(t)=\frac{1}{1+e^{-t}}$, $C_\sigma=10^6$, $n=5$, $r=4$. The x-axis is variable $t$, the y-axis is the logarithmic value of both sides of \autoref{eq:sigma}.}
    \label{fig:SigmoidInequality}%
\end{figure}
\end{remark}

By combining \autoref{le:WaveletApprox} and \autoref{le:SConditions}, we get the following theorem:
\begin{theorem}[$\mathcal{L}^1$-convergence] \label{thm:ConvR}
Let $\sigma$ be an activation function that satisfies the conditions in \autoref{le:SConditions}, and let $\psi_{k,b}$ be a frame constructed from $\sigma$ by \autoref{de:cf}. For any function $f\in \mathcal{L}^1(\R^n)$ and any positive integer $N$, there exists a function
$$f_N \in \text{span}_N(\mathcal{F}) \subseteq \mathcal{L}^1(\R^n) \text{ where } \mathcal{F}:=  \set{\vx \in \R^n \to \psi_{k,b}(\vx): (k,b) \in \Z , b \in 2^{-\frac{k}{n}}\Z }$$ and $\text{span}_N$ denotes linear combinations of at most $N$ terms in the set, such that
\begin{equation}
\label{eq:app_error}
\norm{f-f_N}_{L^2} \leq \norm{f}_{\mathcal{L}^1} (N+1)^{-1/2}.
\end{equation}

\end{theorem}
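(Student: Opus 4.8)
\section*{Proof proposal for \autoref{thm:ConvR}}

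The plan is to obtain \autoref{thm:ConvR} by combining the two preceding ingredients: the fact (from \autoref{le:SConditions}) that the kernels $(S_k)_{k \in \Z}$ attached to a circular activation satisfying \autoref{eq:sigma} form an approximation to the identity with the additional double Lipschitz regularity \autoref{eq:DoubleLipschitzCondition}, and the abstract wavelet--frame / $N$-term approximation result \autoref{le:WaveletApprox} collected in the appendix.

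First I would record the elementary structural observation that the frame elements are combinations of circular decision functions: by \autoref{de:cf} one has $\psi_{k,b}(\vx) = 2^{k/2}\,\psi\!\left(2^{k/n}(\vx-b)\right)$ with $\psi = \psi_{0,0} = S_0(\cdot,0) - S_{-1}(\cdot,0) = \varphi(\cdot) - \tfrac12\varphi\!\left(2^{-1/n}\,\cdot\right)$, and each dilate of $\varphi$ is, after rescaling, of the form $C\,\sigma(r^2 - \norm{\cdot}^2)$. Hence every $\psi_{k,b}$ is a difference of two circular decision functions, so $\mathcal{F} \subseteq \overline{\text{span}}\,\mathcal{D}(I)$; this is what turns the conclusion into a statement about circular neural networks, and in particular shows $\mathcal{F} \subseteq \mathcal{L}^1(\R^n)$.

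Next I would check that the standing hypotheses of the theorem are exactly those of \autoref{le:SConditions}, namely that $\sigma$ is monotonically increasing and satisfies the derivative bounds \autoref{eq:sigma} for $i=0,1,2$. Therefore \autoref{le:SConditions} applies and $(S_k)_{k\in\Z}$ is an AtI as in \autoref{def:wavelet} satisfying \autoref{eq:DoubleLipschitzCondition}. Feeding this into \autoref{le:WaveletApprox} --- the general result (resting on the Calder\'on-type reproducing formula of \cite{DenHan09} for the frame property together with the standard greedy $N$-term estimate used in \cite{ShaCloCoi18}) that an AtI with the double Lipschitz property generates a frame $\mathcal{F}$ for $L^2(\R^n)$ for which the atomic $\mathcal{L}^1$-norm controls best $N$-term approximation --- yields, for every $f \in \mathcal{L}^1(\R^n)$ and every $N \in \N$, an element $f_N \in \text{span}_N(\mathcal{F})$ with $\norm{f - f_N}_{L^2} \le \norm{f}_{\mathcal{L}^1}(N+1)^{-1/2}$, which is precisely \autoref{eq:app_error}.

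The only place that requires genuine care, rather than a citation, is the bookkeeping of the parameters: \autoref{le:SConditions} delivers the AtI and double Lipschitz conditions with the specific choices $C_\rho = 1$, $\epsilon = \zeta = 1/n$ and explicit constants depending on $C_d$, $C_\sigma$ and $n$, and one must verify that these lie in the admissible range demanded by \autoref{le:WaveletApprox}. Since the abstract statement only needs some positive $\epsilon,\zeta$ and finite constants, this is a routine check; in particular the dimension-dependent normalizations $2^{k}$ and $2^{k/n}$ in \autoref{de:cf} have been chosen precisely so that the scaling matches the one required by the frame theorem, so no quantitative difficulty beyond this bookkeeping is expected.
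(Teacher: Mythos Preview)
Your proposal is correct and follows essentially the same approach as the paper: both proofs simply invoke \autoref{le:SConditions} to certify that $(S_k)_{k\in\Z}$ is an AtI with the double Lipschitz property, and then apply \autoref{le:WaveletApprox} to obtain the frame property and the $N$-term bound \autoref{eq:app_error}. Your additional remarks---that each $\psi_{k,b}$ is a difference of two circular decision functions and the parameter bookkeeping---are useful elaborations but not part of the paper's (very terse) argument.
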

\begin{proof}
First, we note that the functions $\set{S_k: k \in \Z}$ are an AtI, which satisfies the double Lipschitz condition (see \autoref{def:wavelet}). Thus associated to \autoref{le:WaveletApprox}
${\mathcal F}$ is a wavelet frame.
Moreover, let $f_N$ be the wavelet approximation specified in \autoref{le:WaveletApprox}, then
it satisfies \autoref{eq:app_error}.
\end{proof}

\begin{remark}\label{rem:NumberOfNeurons}
An original version of \autoref{thm:ConvR} has been applied to prove convergence for four layer networks (note, this means two hidden, one input and one output layer). Their network deals with a manifold setting and the first layer is responsible to determine {\bf compact atlas maps}; see \autoref{fig:CompaNets}, where the left image corresponds to \cite[Figure 3]{ShaCloCoi18}. However, the {\bf compact atlas} is essential in their analysis, which is related to the fact that affine linear neurons of the form $\vx \to \varphi(\vx):= C_d \sigma(\omega^T \vx + \theta)$ {\bf cannot} satisfy \autoref{it4} in \autoref{def:wavelet}, which is $\int_{\R^n}\varphi(\vx)d\vx=1$, and in turn the results from \cite{DenHan09} cannot be applied in free space $\R^n$, but of course it applies, when it is constrained to a compact set.
\begin{figure}[H]%
    \protect
    \centering
    \subfloat[Shaham et. al.’s neural network structure]{\includegraphics[width=7cm]{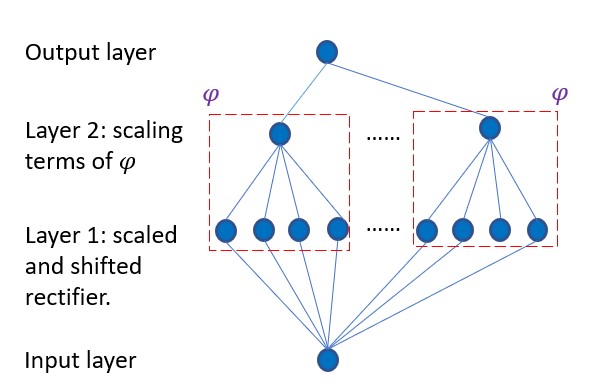} }%
    \qquad
    \subfloat[Our neural network structure]{\includegraphics[width=7cm]{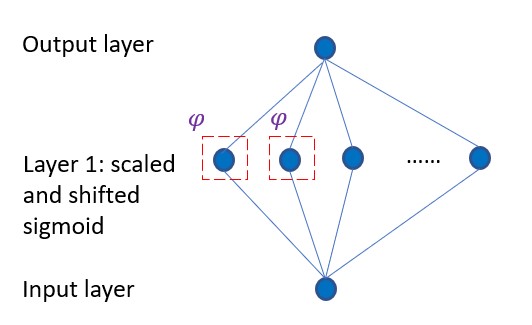} }%
    \caption{Comparison and improvement}
    \label{fig:CompaNets}%
\end{figure}
\end{remark}
In Lemma 4.5 in \cite{ShaCloCoi18} also an $L^\infty$ approximation result is proven,
which can be carried over to our setting as well.

\section{Numerical experiments} \label{sec:numerics}
In this section we study numerically the approximation of functions with linear combinations of quadratic decision functions, in particular circular and hyperbolic ones, as defined in \autoref{de:quadratic}.
We compare the numerical results with results produced by approximation with
affine linear decision functions.

Moreover, we compare deep affine linear neural networks with quadratic neural networks with shallow layer structure, that is a three-layer network (one hidden layer) (cf. right image of \autoref{fig:CompaNets}).

We also compare numerically the approximation properties of deep quadratic neural networks, which is not considered theoretically here.
We have chosen two simple test cases of one-dimensional functions (see \autoref{fig:gd}) to approximate, for which we analyze the approximation properties of different neural networks numerically.

Finally, our goal is to extend the basic proof-of-concept examples to some clusterization problems and discuss some real world application examples.

\subsection{Proof-of-concept example}

\subsubsection{Ground Truth Data}
We study the numerical approximation of two simple test-data, which are analytically given by
\begin{equation}
        f_1(x)=
        \begin{cases}
            0 &x \leq 0 \\
            x &0 < x \leq 3 \\
            3 &3 < x \leq 5 \\
            -0.4 x + 5 &5 < x \\
        \end{cases} \qquad
        f_2(x)=
        \begin{cases}
            0 &x \leq 0 \\
            x^2 &0 < x \leq 3 \\
            9 &3 < x \leq 5 \\
            9 \e^{-(x-5)} &5 < x \\
        \end{cases}
\end{equation}
\begin{figure}[H]%
    \protect
    \centering
    \subfloat[$f_1$]{\includegraphics[width=7cm]{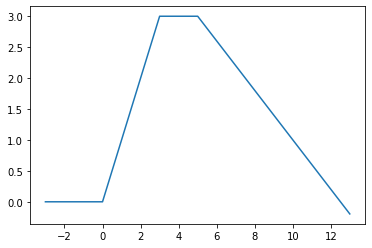} }%
    \qquad
    \subfloat[$f_2$]{\includegraphics[width=7cm]{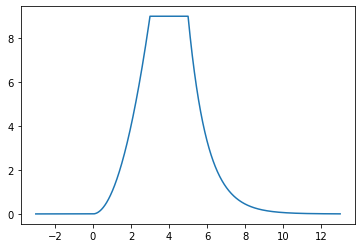} }%
    \caption{The two functions that are approximated via different types of neural networks in our numerical experiments below.
    }
    \label{fig:gd}%
\end{figure}

\subsubsection{Generation of training data and initialization}
The functions from above were evaluated in $1600$ uniformly distributed points in $[-3,13]$. 
The pairs $(x,y)$ of generated data were then randomly split into $1072$ training pairs and $528$ test pairs. In the rest of the paper $f_2$ is observed, but the analysis has also been conducted for $f_1$, where the results are similar.

\subsubsection{Implementation details}

The implementation is based on already implemented methods of {\bf \emph{TensorFlow}} \cite{AbaAgaBamBreChe16_report} and {\bf \emph{Keras}} \cite{Cho15}, with adaptations - where necessary - to resemble the structure of the decision functions from \eqref{eq:GF_elliptic}. The following pseudocode illustrates the general procedure and highlights custom implemented features.

\begin{algorithm}
\caption*{Structure of an elliptical layer}
\begin{algorithmic}

\State \textbf{Inputs:}
\State $\quad$ inputs $\leftarrow$ function values of ground truth data
\State $\quad$ input\_dimension $\leftarrow$ numer of neurons of input
\State $\quad$ output\_dimension $\leftarrow$ number of neurons of output

\State \textbf{Variables:}

\State $\quad$ \HiLi $\sigma_i$ $\leftarrow$ weights for the 'elliptical' part (dim of matrix: input dimension $\times$ number of neurons)
\State  $\quad$  $\vwm_{1..n}$ \ $ \leftarrow$ weights for the 'affine linear' part (dim of matrix: input\_dimension $\times$ number of neurons)
\State $\quad$  \HiLi $\vwm_{n+1}$ $\leftarrow$ weight that is multiplied with 'elliptical part' for each neuron (dim of matrix: 1 $\times$ number of neurons)
\State  $\quad$  $\theta$ $\leftarrow$ bias for each neuron to add to 'affine linear part' (dim of matrix: 1 $\times$ number of neurons)

\State \textbf{Initialize:}

\State  $\quad$  $\sigma_i$, $\vwm_{1..n}$, $\vwm_{n+1}$  $\leftarrow$ random gaussian distribution (mean=1.0, stddev=0, seed=None)
\State  $\quad$  $\theta$ $\leftarrow$ 0

 \Class{Elliptical\_layer(input\_dimension, output\_dimension)}

 \Function{call(inputs)}
  \State \HiLi outputs $\leftarrow$ $\vwm_{1..n}$ $\cdot$ inputs + $\vwm_{n+1}$ $\cdot$  $\sigma_i^2$ $\cdot$ inputs$^2$   + $\theta$
   \State     store outputs
    \State    return outputs
  \EndFunction

   \Function{backprop(labels):}
  \State update weights with Adam optimizer
    \EndFunction

\EndClass

\end{algorithmic}
\end{algorithm}

\begin{algorithm}
\caption*{Stacking together different layers and training (Example for a deep neural network)}
\begin{algorithmic}

\State \textbf{Initialize:}
\State $\quad$ $x_{train},y_{train}$ $\leftarrow$ training data
\State $\quad$ $test_{epochs}$ $\leftarrow$ number of complete pass throughs of the training data
\State $\quad$ layer\_in $\leftarrow$ Object of Elliptic class with 1 input, $X_1$ outputs
\State $\quad$ layer\_hidden\_1 $\leftarrow$ Object of Elliptic class with $X_1$ input, $Y_n$ outputs
\State $\quad \quad $ $\vdots$
\State $\quad$ layer\_hidden\_n $\leftarrow$ Object of Elliptic class with $X_n$ input, $Y_n$ outputs
\State $\quad$ layer\_out $\leftarrow$ Object of Elliptic class with $Y_n$ input, 1 outputs

  \State \HiLi network = List(layer\_in, layer\_hidden\_1, $\,\ldots\,$ ,layer\_hidden\_n, layer\_out)


 \State \textbf{Output:} $\mathrm{network}_{loss} = \textrm{network}.fit(x_{train},y_{train}, epochs=test_{epochs})$

\end{algorithmic}
\end{algorithm}

The implementation in our specific setting has been done for one, three and four hidden layers, but, as the pseudocode demonstrates, it can easily  be adapted to neural networks with any number of hidden layers. The Adam optimizer with a learning rate of $0.001$ has been selected as the optimization method of choice.
If we determine a bad initialization, it randomly chooses another one and starts again.

The following results shown in the next subsections have all been performed on a 2,4 GHz 8-Core Intel Core i9 processor with 32 GB RAM.

\subsubsection{Convergence rates of shallow elliptic networks}
We have varied the number of neurons $N$ to evaluate the convergence rates proven in  \autoref{thm:ConvR}, which shows the predicted convergence rates of elliptic neural networks (generalization of circular decision functions, which lead to more stable results). The results can be observed in \autoref{fig:rates}.

 \begin{figure}[H]%
    \protect
    \centering
    \includegraphics[width=7cm]{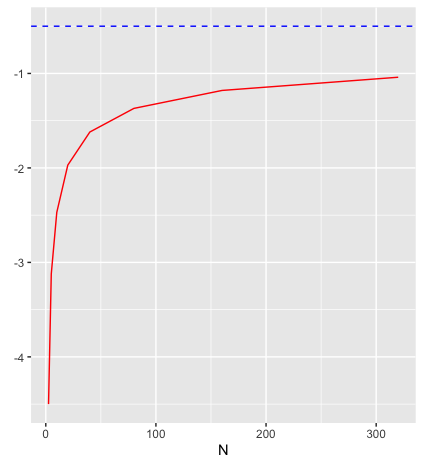} %
    \caption{The graph depicts the convergence rates of $\frac{\log(\frac{\norm{f-f_N}_{L^2}}{\norm{f}_{L^1}})}{\log(N+1)}$ in our numerical experiments, which are always below the upper bound given by the theoretical results.
    This means that actually the estimate \autoref{eq:app_error} seems to be too conservative.
}%
    \label{fig:rates}%
\end{figure}

In a next step, the convergence rates of elliptic neural networks should be compared with those of affine linear and hyperbolic ones. The following graph shows the development of the reconstruction each adding up 50 epochs.
One epoch describes one complete pass through the training data, as in the {\bf\emph{Keras}} library. During each epoch, the weights (including $\sigma_{i}$) are updated.

The images in \autoref{fig:development_epochs} show us, that when one considers training for 50 epochs, only approximation via elliptic (generalization of circular) decision functions performs reasonably well. When using more epochs, hyperbolic and affine linear catches up with elliptic and have a similar approximation error (see also \autoref{table:shallow}). \autoref{fig:error_function} provides us with the error function, where it is clearly observable that the elliptic layers converge faster than the affine linear and hyperbolic ones.

The MSE error corresponds with the ${L^2}$-norm from the theoretical section.

\begin{table}[H]
\begin{tabular}{|c|c|c|c|c|c|}
 \hline
 Type of network
 &
 Training epochs
 &
 Hidden layers
 &
 Units per hidden layer
 &
 Test MSE
 &
 Test MAE\\
 \hline \hline 
 \multirow{2}{4em}{Elliptic} & 250 & 1 & [5] & 0.0731&0.2592 \\
  &   140  & 1   &  [5] & 0.08795&0.25975 \\
 \hline
 \multirow{2}{6em}{Affine Linear} & 250 & 1 &  [5]&0.2008 &0.2102 \\
   &   140  & 1   &  [5] & 0.47645&0.30545 \\
 \hline
  \multirow{2}{4em}{Hyperbolic} & 250 & 1 &  [5]&0.0574 &0.2977 \\
    &   140  & 1   &  [5] & 0.2164&0.3281 \\
 \hline

\end{tabular}
\caption{Overview of the result of the numerical experiments for shallow networks.}
    \label{table:shallow}%
\end{table}

\begin{figure}[H]%
    \protect
    \centering
        \subfloat[Approximation of the function after $50$ epochs]{{\includegraphics[width=7cm]{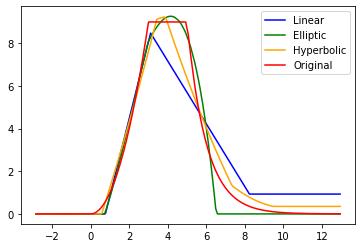} }}%
    \qquad
        \subfloat[Approximation of the function after $100$ epochs]{{\includegraphics[width=7cm]{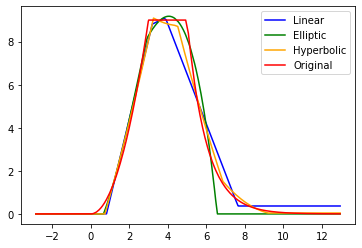} }}%
    \qquad
        \subfloat[Approximation of the function after $150$ epochs]{{\includegraphics[width=7cm]{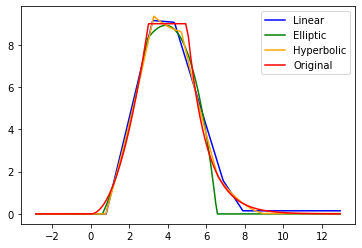} }}%
     \qquad
        \subfloat[Approximation of the function after $200$ epochs]{{\includegraphics[width=7cm]{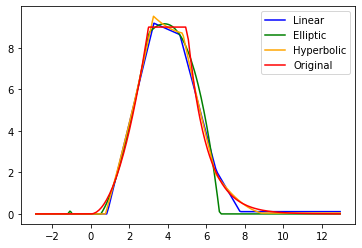} }}%
    \caption{The graphs each show the original function and the approximations obtained with shallow neural networks with each $5$ units per hidden layer, with gradual increase of $50$ epochs each.}%
    \label{fig:development_epochs}%
\end{figure}

\begin{figure}[H]%
    \protect
    \centering
    \subfloat{{\includegraphics[width=6cm]{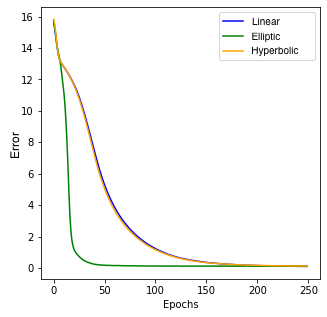} }}%
    \caption{Here we compare the error functions (MSE) of affine linear, elliptic and hyperbolic shallow neural networks. Elliptic layers clearly converge faster than affine linear or hyperbolic layers in this setting. }%
    \label{fig:error_function}%
\end{figure}

Please note, that the approximation with hyperbolic and affine linear layers do not satisfy all proposed conditions on $\sigma$ (see Lemma \ref{le:SConditions}).

\subsubsection{Deep networks}
In this section, we present the error functions for deep neural networks with multiple hidden layers, associated to the pseudocode presented before.

The following results (both in \autoref{fig:deep_network1} and \autoref{fig:deep_network2}) confirm our hypothesis, that the deep elliptic neural networks converge faster than the affine linear ones. Full results can be observed in \autoref{table:deep3}.
\begin{figure}[H]%
    \protect
    \captionsetup[subfigure]{justification=centering}
    \centering
        \subfloat[{Each hidden layer has $5$ neurons included, here we compare a 3-layer elliptic network with a 4-layer affine linear one.}][{Each hidden layer has $5$ neurons included,\\here we compare a 3-layer elliptic \\network with a 4-layer affine linear one.}]{\includegraphics[width=7cm]{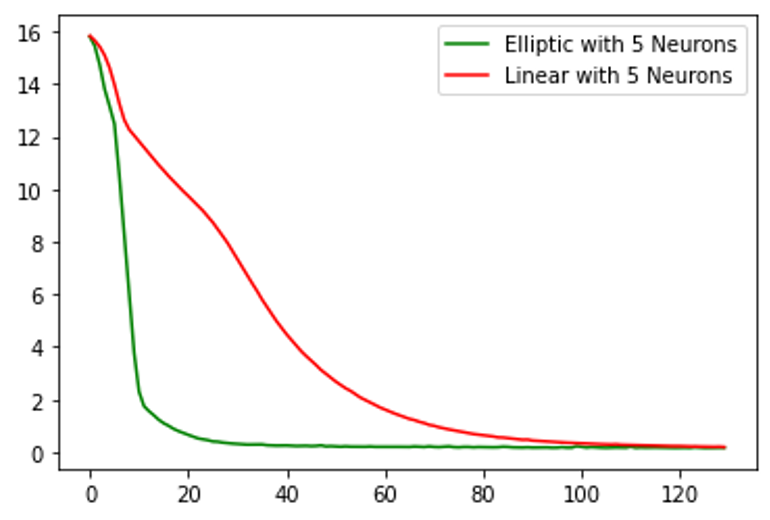} } %
    \qquad
        \subfloat[Each hidden layer has $30$ neurons included,\\here we compare a 3-layer elliptic \\network with a 4-layer affine linear one.][Each hidden layer has $30$ neurons included,\\here we compare a 3-layer elliptic \\network with a 4-layer affine linear one.]{{\includegraphics[width=7cm]{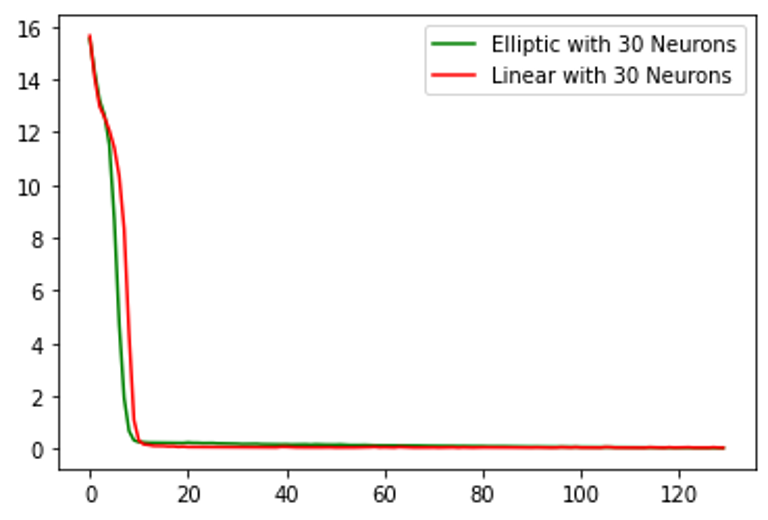} }}%

    \caption{A 3 hidden layer elliptic neural network converges faster than a 4 hidden layer affine linear one. In the $x-$axis it shows the number of epochs of the training, in the $y-$axis the mean squared error of the resulting approximation.}%
    \label{fig:deep_network1}%
\end{figure}

\begin{figure}[H]%
    \protect
    \centering
        \subfloat{{\includegraphics[width=7cm]{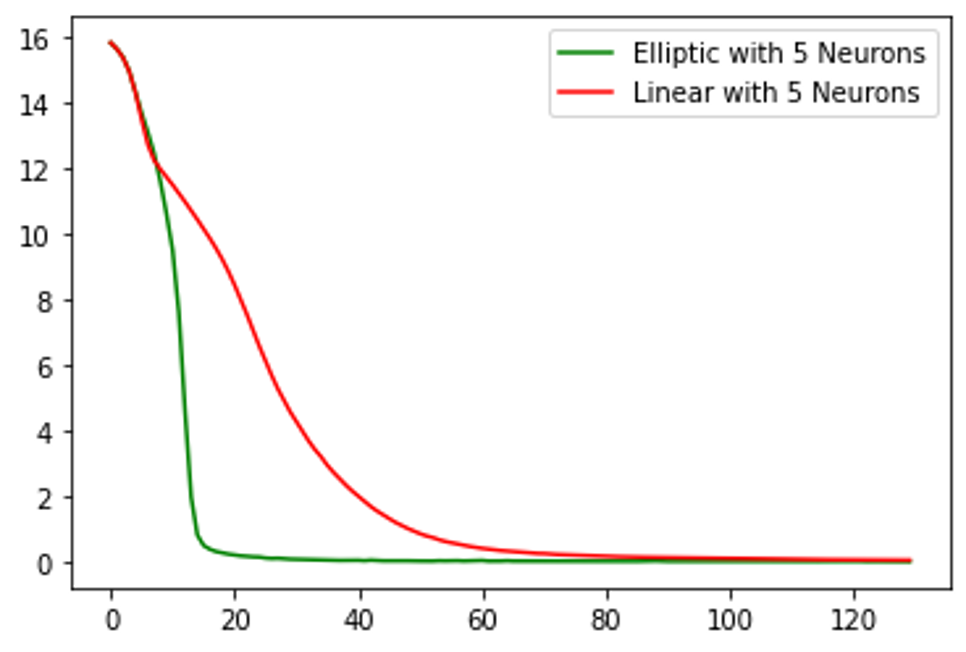} }}%

    \caption{A 4 hidden layer elliptic neural network converges faster than a 4 hidden layer affine linear one. Each hidden layer has $5$ neurons included, here we compare a 4-layer elliptic network with a 4-layer affine linear one.
    As before, it shows in the $x-$axis the number of epochs of the training, in the $y-$axis the mean squared error of the resulting approximation.}%
    \label{fig:deep_network2}%
\end{figure}

\begin{table}
\begin{tabular}{|c|c|c|c|c|c|}
 \hline
 Type of network
 &
 Training epochs
 &
 Hidden layers
 &
 Units per hidden layer
 &
 Test MSE
 &
 Test MAE\\
 \hline \hline
 \multirow{3}{4em}{Elliptic} & 140 & 3 & [5, 5, 5] & 0.0106&0.1752 \\
 &   140  & 3   & [30, 30, 30] & 0.0014&0.0404 \\
 & 140 & 4 &  [5, 5, 5, 5] & 0.0528 &0.0767 \\
 \hline
 \multirow{2}{6em}{Affine Linear} & 140 & 4 &  [5,5,5,5]&0.0105 &0.2074 \\
 &   140  & 4 & [30,30,30,30] &0.0453 &0.0618 \\
 \hline
\end{tabular}

\caption{Overview of the result of the numerical experiments for deep networks.}
 \label{table:deep3}%
\end{table}

\subsection{Clusterization examples}

In this subsection, we are going to look at a more "applied setting", in detail classification problems.

We will start with the well-known MNIST dataset (see \cite{Deng12}), which is a database consisting out of 60,000 examples of training data as well as 10,000 examples as test data of handwritten digits. The images which were used are size-normalized and centered in a fixed-size image.

Out of these images a t-distributed stochastic neighbour embedding (t-SNE) was generated. This procedure is a machine learning algorithm for dimensionality reduction and often used for visualization purposes. t-SNE preserves local structures of the dataset by letting the distances between points stay the same.

The use case for the quadratic neural networks would be the following: By only having the different clusters, the goal would now be to correctly classify MNIST images of which only the t-SNE embedding is known.

\subsubsection{Generation of training data and initialization}\label{sec:simulations}
The training data is generated via the t-distributed stochastic neighbour of the 60,000 examples - in the following one can see a visualization of it. One sees the clear groups of the different digits. For the test data set, we have generated $10,000$ data points, where the correct classification shall be determined. For means of simplicity, this paper observes the correct clustering of e.g. digit 8.

To determine the correct clustering for the other digits as well, one simply generates additional networks for the classification of the other numbers. When having conducted these experiments, this has increased the performance for the other digits as well. For the generation of the neighbour clustering, we have used code available on Kaggle for the t-sne-visualization.

\begin{figure}[H]

\centering

    \includegraphics[width=0.5\textwidth]{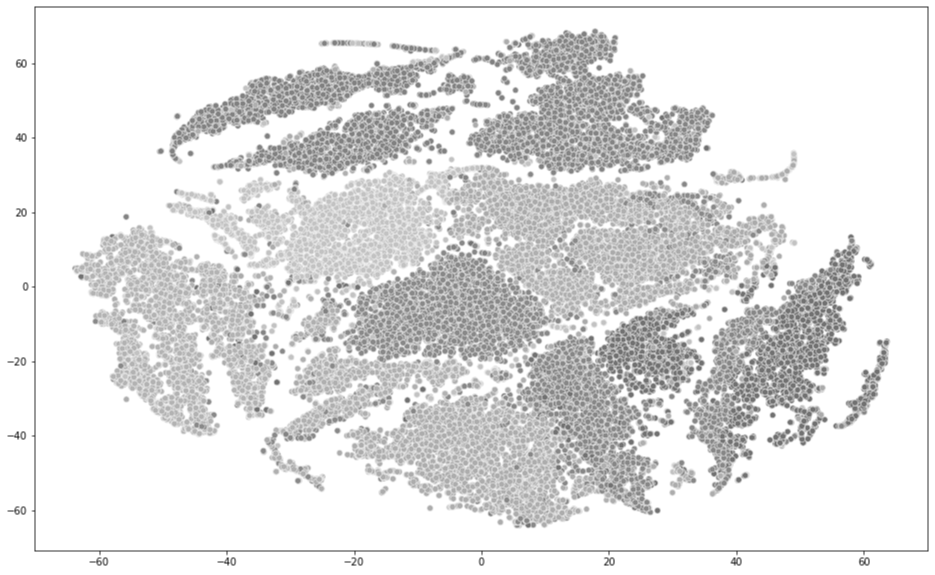}
\caption{t-distributed stochastic neighbour clustering}

\end{figure}

\subsubsection{Results}

The following table indicates the advantage of using elliptic (generalization of circular) decision functions and not linear ones. When using more epochs, hyperbolic and affine linear catches up with elliptic and have a similar approximation error (see also \autoref{table:deep}).

We have used different combinations to observe the different outcomes (see \autoref{fig:fig1_clustering}).

\begin{table}[H]
\begin{tabular}{|c|c|c|c|c|c|}
 \hline
 Type
 &
 Epochs
 &
 H. layers
 &
 Units per hidden layer
 &
 S. C. Crossentropy\\
 \hline \hline
Elliptic & 10 & 1 & [5] & 0.9515 \\
Elliptic  &   10  & 3   & [5,5,5] & 0.9757 \\
Elliptic   &   30  & 3   & [20,20,20] & 0.9770 \\
 \hline
Affine Linear & 10 & 1 & [5] & 0.9033 \\
Affine Linear &   10  & 3   & [5,5,5] & 0.9570 \\
Affine Linear  &   30  & 3   & [20,20,20] & 0.973 \\
 \hline
\end{tabular}
\caption{Overview of the result of the numerical experiments for deep networks.}
 \label{table:deep}%
\end{table}

\begin{figure}[H]%
    \protect
    \captionsetup[subfigure]{justification=centering}
    \centering
        \subfloat[{Classification via a 3-layer elliptic \\ network and 10 training epochs}][{Classification via a 3-layer elliptic \\ network and 10 training epochs}]{\includegraphics[width=7cm]{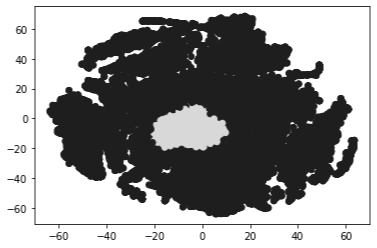} } %
    \qquad
        \subfloat[Classification via a 3-layer linear \\ network and 10 training epochs][Classification via a 3-layer linear \\ network and 10 training epochs]{{\includegraphics[width=7cm]{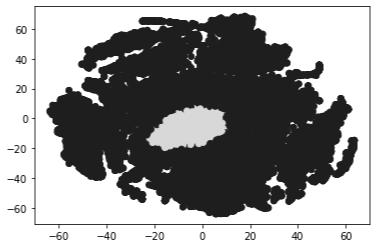} }}%

    \caption{Comparison of elliptic with linear affine layers on MNIST dataset}%
    \label{fig:fig1_clustering}%
\end{figure}

\subsubsection{Summary and possible extensions}

Having also performed other experiments, we can clearly state that there are several cases, under which the use of elliptical layers practically makes sense. Especially, when one is interested into convergence rates, one is obliged to use a small neural network structure (or even shallow neural networks). In such cases, these specially constructed decision functions clearly have an advantage.

\begin{figure}[H]%
    \protect
    \captionsetup[subfigure]{justification=centering}
    \centering
        \subfloat[{Image 1 where elliptical decision \\ functions have an advantage over linear}][{Image 1 where elliptical decision \\ functions have an advantage over linear}]{\includegraphics[width=7cm]{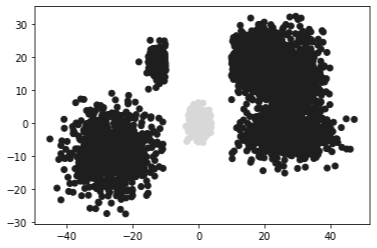} } %
    \qquad
        \subfloat[Image 2 where elliptical decision \\ functions have an advantage over linear][Image 2 where elliptical decision \\ functions have an advantage over linear]{{\includegraphics[width=7cm]{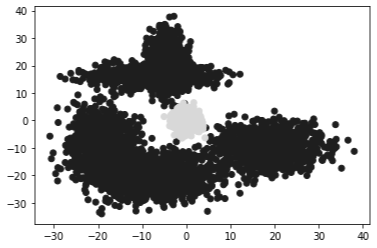} }}%

    \caption{Goal is to correctly identify the yellow subspecies out of a bigger population}%
    \label{fig:fig_subspecies}%
\end{figure}

In the following analysis, we will discuss the classification on \autoref{fig:fig_subspecies} (B) when one sole layer (shallow neural network structure) is used. Via those two "relatively simple" examples, it lets us understand which effect the different layers particularly have from a geometrical point of view.  This we will compare with the usage of e.g. classical linear layers. The setting was done similarly as above, we have used for the experiments a shallow neural network structure with 20 epochs of training. \\

We remember: The subpopulation should be identified out of a much bigger population. As already performed in the previous experiment, one has specific data points given in both the populations.   \\

One sees that the classification via an elliptical layer leads to a loss of 0.004 and an accuracy of 0.998. Therefore this structure can very well classify elliptical data points.

\begin{figure}[H]
  \centering
  \begin{minipage}[b]{0.45\textwidth}
    \includegraphics[width=\textwidth]{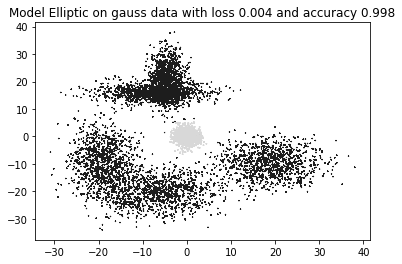}
    \caption{Classification via one elliptical layer and 20 epochs of training}
  \end{minipage}

\end{figure}

When one e.g. observes the classification via one parabolic layer, the following phenomena occurs:

\begin{figure}[H]
  \centering
  \begin{minipage}[b]{0.45\textwidth}
    \includegraphics[width=\textwidth]{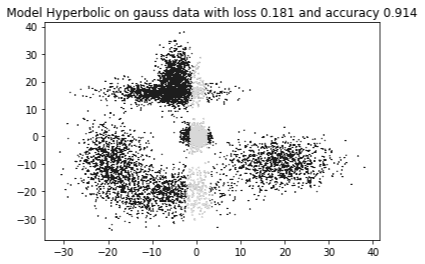}
    \caption{Classification via one hyperbolic layer and 20 epochs of training}
  \end{minipage}

\end{figure}

This is why it makes definite sense to choose a specific layer for a specific application case. This also comes into play when one uses a shallow linear network, which does not lead to reasonably well results.

\begin{figure}[H]
  \centering
  \begin{minipage}[b]{0.45\textwidth}
    \includegraphics[width=\textwidth]{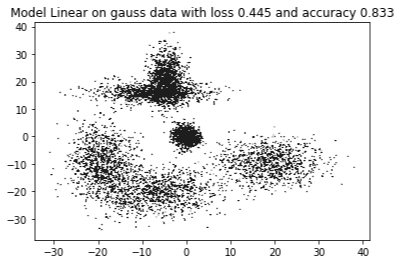}
    \caption{Classification via one linear layer and 20 epochs of training}
  \end{minipage}

\end{figure}

When using more epochs and more hidden layers, hyperbolic and affine linear catches up with elliptic and have a similar approximation error. A similar result has been observed in the previous experiment. \\

 \autoref{im:elliptic_acc} provides us with the accuracy of the elliptic model and  \autoref{im:elliptic_loss} the development of the loss function.

 \begin{figure}[H]
  \centering
  \begin{minipage}[b]{0.45\textwidth}
    \includegraphics[width=\textwidth]{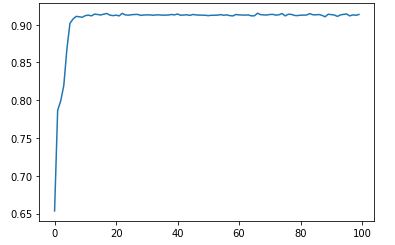}
    \caption{Development of the accuracy when using one shallow affine linear layer and increasing the number of epochs} \label{im:elliptic_acc}
  \end{minipage}

\end{figure}

 \begin{figure}[H]
  \centering
  \begin{minipage}[b]{0.45\textwidth}
    \includegraphics[width=\textwidth]{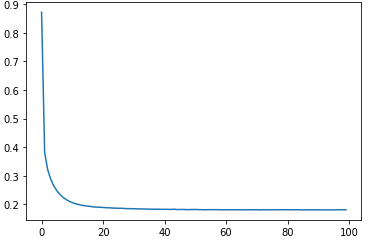}
    \caption{Development of the loss when using one shallow affine linear layer and increasing the number of epochs} \label{im:elliptic_loss}
  \end{minipage}

\end{figure}

\section{Conclusion}\label{sec:conclusions}
In this paper we suggested the use of new types of neurons in fully connected neural networks.
We proved a universal approximation theorem, which is applicable to such novel networks. Furthermore, we give the explicit convergence rates for the case of circular neurons, which has the same order as the classical affine linear neurons, but with vastly reduced number of neurons (in particular is spares one layer).

Additionally, the numerical results have confirmed the improved convergence for quadratic neural network functions when compared with affine linear ones, not only for three-layer networks, but also for deep neural networks. Potential next steps involve similar analysis for higher order polynomials, as only quadratic neural network functions have been covered  in this paper. Furthermore, higher dimension problems are a potential matter of interest in a follow-up paper.

\appendix
\section{Approximation to the identity (AtI)} \label{sec:appendix}
\begin{definition}[Approximation to the identity \cite{DenHan09}] \label{def:wavelet}
A sequence of {\bf symmetric kernel functions} $(S_k: \R^n \times \R^n  \to \R)_{k\in \Z}$ is said to be an {\bf approximation to the identity (AtI)} if there exist a quintuple $(\epsilon,\zeta,C,C_\rho,C_A)$ of positive numbers satisfying the additional constraints
\begin{equation} \label{eq:quin}
	0 < \epsilon \leq \frac{1}{n}, 0 < \zeta \leq \frac{1}{n} \text{ and } C_A < 1
\end{equation}
the following three conditions are satisfied for all $k \in \Z$:
\begin{enumerate}
\item \label{it1}
      $\abs{S_k(\vx,\vy)} \leq C\frac{2^{-k\epsilon}}{\left(2^{-k}+ C_\rho\norm{\vx-\vy}^n \right)^{1+\epsilon}}$ for all $\vx,\vy \in \R^n$;
\item \label{it2}
      $\abs{S_k(\vx,\vy) - S_k(\vx',\vy)} \leq C \left( \frac{C_\rho\norm{\vx-\vx'}^n }{2^{-k}+ C_\rho\norm{\vx-\vy}^n }\right)^{\zeta}\frac{2^{-k\epsilon}}{\left(2^{-k}+ C_\rho\norm{\vx-\vy}^n \right)^{1+\epsilon}}$ \\
      for all triples $(\vx,\vx',\vy) \in \R^n \times \R^n \times \R^n$ which satisfy
      \begin{equation} \label{eq:rest_item2}
      C_\rho\norm{\vx-\vx'}^n  \leq C_A \left(2^{-k}+ C_\rho\norm{\vx-\vy}^n \right);
      \end{equation}
\item \label{it4} $\int_{\R^n} S_k(\vx,\vy) d\vy = 1$ for all $\vx \in \R^n$.
\end{enumerate}
Moreover, we say that the AtI satisfies the {\bf double Lipschitz condition} if there exist a triple $(\tilde{C},\tilde{C}_A,\zeta)$ of positive constants satisfying
\begin{equation} \label{eq:quin2}
  \tilde{C}_A < \frac12,
\end{equation}
such that for all $k \in \Z$
      \begin{equation}\label{eq:DoubleLipschitzCondition}
      \begin{aligned}
      &\abs{S_k(\vx,\vy) - S_k(\vx',\vy) - S_k(\vx,\vy') + S_k(\vx',\vy')}\\
      \leq & \tilde{C} \left( \frac{C_\rho\norm{\vx-\vx'}^n }{2^{-k}+C_\rho\norm{\vx-\vy}^n} \right)^\zeta
      \left( \frac{C_\rho \norm{\vy-\vy'}^n}{2^{-k}+C_\rho\norm{\vx-\vy}^n }\right)^\zeta \frac{2^{-k \epsilon}}{(2^{-k}+C_\rho\norm{\vx-\vy}^n )^{1+\epsilon}}
\end{aligned}
      \end{equation}
      for all quadruples $(\vx,\vx',\vy,\vy')\in \R^n \times \R^n \times \R^n \times \R^n$ which satisfy
      \begin{equation} \label{eq:rest_item3}
  C_\rho \max\set{\norm{\vx-\vx'}^n,\norm{\vy-\vy'}^n} \leq \tilde{C}_A \left( 2^{-k}+C_\rho\norm{\vx-\vy}^n\right).
      \end{equation}
\end{definition}

The conditions \autoref{it2} and \autoref{eq:rest_item3} are essential for our analysis. We characterize now geometric properties of these constrained sets:
\begin{lemma} \label{lem:kk}
	\begin{itemize} Let $C_\rho=1$, $C_A=2^{-n}$ and $\tilde{C}_A=3^{-n}$.
		\item Then set of triples $(\vx,\vx',\vy)$ which satisfy \autoref{eq:rest_item2} and for which $\norm{\vx - \vy}^n \geq 2^{-k}$ and all $t \in[0,1]$ satisfy
		\begin{equation} \label{eq:kk}
             \norm{\vx+t(\vx'-\vx)-\vy}^n \geq 2^{-n}\norm{\vx-\vy}^n - 2^{-n} 2^{-k}.
		\end{equation}
	    \item
        The set of quadrupels $(\vx,\vx',\vy,\vy')$ which satisfy \autoref{eq:rest_item3}  and for which $\norm{\vx - \vy}^n \geq 2^{-k}$ satisfy
	    \begin{equation} \label{eq:kk1}	
         \norm{\vx+t_\vx(\vx'-\vx)-\vy-t_\vy(\vy'-\vy)}^n\geq 3^{-n} \norm{\vx-\vy}^n - 3^{-n} 2^{1-k}
	    \end{equation}
        for all $t_\vx,t_\vy\in[0,1]$.
	\end{itemize}
\end{lemma}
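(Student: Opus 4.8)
The plan is to obtain both estimates from one mechanism: use the reverse triangle inequality to strip off the convex‑combination perturbations, and then use the concavity of $t\mapsto t^{1/n}$ on $[0,\infty)$ (i.e.\ a weighted Jensen inequality) to repackage the resulting crude $\tfrac12$‑ and $\tfrac13$‑bounds into the stated lower bounds. Throughout I set $a:=\norm{\vx-\vy}$ and recall the standing hypothesis $a^n\geq 2^{-k}$ together with the choices $C_\rho=1$, $C_A=2^{-n}$, $\tilde C_A=3^{-n}$.

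For \autoref{eq:kk}: put $\vz:=\vx+t(\vx'-\vx)$, so $\vz-\vy=(\vx-\vy)+t(\vx'-\vx)$ and, since $t\in[0,1]$, the reverse triangle inequality gives $\norm{\vz-\vy}\geq a-\norm{\vx'-\vx}$. Condition \autoref{eq:rest_item2} reads $\norm{\vx'-\vx}^n\leq 2^{-n}(2^{-k}+a^n)$, hence $\norm{\vx'-\vx}\leq\tfrac12(2^{-k}+a^n)^{1/n}$. Now I would invoke concavity of $t\mapsto t^{1/n}$ at the midpoint of the two numbers $a^n+2^{-k}$ and $a^n-2^{-k}$ (both nonnegative because $a^n\geq 2^{-k}$):
\begin{equation*}
  \tfrac12(a^n+2^{-k})^{1/n}+\tfrac12(a^n-2^{-k})^{1/n}\leq\bigl(\tfrac12(a^n+2^{-k})+\tfrac12(a^n-2^{-k})\bigr)^{1/n}=a .
\end{equation*}
Chaining the two bounds yields $\norm{\vz-\vy}\geq\tfrac12(a^n-2^{-k})^{1/n}\geq 0$, and raising to the $n$-th power is precisely \autoref{eq:kk}.

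For \autoref{eq:kk1}: put $\vz:=\vx+t_\vx(\vx'-\vx)-\vy-t_\vy(\vy'-\vy)$, so that $\norm{\vz}\geq a-\norm{\vx'-\vx}-\norm{\vy'-\vy}\geq a-2\max\set{\norm{\vx'-\vx},\norm{\vy'-\vy}}$, and \autoref{eq:rest_item3} bounds that maximum by $\tfrac13(2^{-k}+a^n)^{1/n}$. Here I would distinguish cases according to whether $a^n\geq 2^{1-k}$: if $a^n<2^{1-k}$ the right‑hand side of \autoref{eq:kk1} is negative and there is nothing to prove; otherwise $a^n-2\cdot 2^{-k}\geq 0$, and concavity of $t\mapsto t^{1/n}$ applied to the three points $a^n+2^{-k}$, $a^n+2^{-k}$, $a^n-2\cdot 2^{-k}$ with equal weights $\tfrac13$ gives
\begin{equation*}
  \tfrac23(a^n+2^{-k})^{1/n}+\tfrac13(a^n-2\cdot 2^{-k})^{1/n}\leq\Bigl(\tfrac{2(a^n+2^{-k})+(a^n-2\cdot 2^{-k})}{3}\Bigr)^{1/n}=a .
\end{equation*}
Hence $\norm{\vz}\geq\tfrac13(a^n-2\cdot 2^{-k})^{1/n}$, and taking $n$-th powers delivers \autoref{eq:kk1}.

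I do not anticipate a genuine obstacle: once the Jensen/concavity step is recognized, everything is elementary, with no curvature or derivative estimates needed. The only points demanding a little care are (i) the case split on $a^n$ versus $2^{1-k}$ in the second part, so that every factor $(\,\cdot\,)^{1/n}$ is applied to a nonnegative argument, and (ii) tracking the constants $C_\rho,C_A,\tilde C_A$ from \autoref{def:wavelet} exactly as fixed in the statement, since the exponents $2^{-n}$ and $3^{-n}$ are what make the midpoint/barycenter of the $(\cdot)^{1/n}$ arguments collapse to $a^n$.
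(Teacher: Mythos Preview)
Your proof is correct and is essentially the same argument as the paper's: both combine the triangle inequality with Jensen's inequality for $t\mapsto t^n$ (you phrase it dually as concavity of $t\mapsto t^{1/n}$, the paper as $a^n+b^n\geq 2^{1-n}(a+b)^n$ and $a^n+b^n+c^n\geq 3^{1-n}(a+b+c)^n$). The only difference is organizational---the paper works with $n$-th powers throughout and subtracts the perturbation term at the end, whereas you first obtain a linear lower bound via the reverse triangle inequality and take the $n$-th power only at the last step.
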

\begin{proof}
	\begin{itemize}
		\item With the concrete choice of parameters $C_A$, $C_\rho$ \autoref{eq:rest_item2} reads as follows
		\begin{equation} \label{eq:diff}
		 \norm{\vx-\vx'}^n \leq  2^{-n}\left(2^{-k} + \norm{\vx-\vy}^n\right).
		\end{equation}
        Since we assume that $\norm{\vx-\vy}^n \geq 2^{-k}$ it follows from \autoref{eq:diff} that
		\begin{equation*}
			\norm{\vx-\vx'} \leq  2^{-1}\norm{\vx-\vy}.
		\end{equation*}
        In particular $\norm{\vx-\vy}-\norm{\vx-\vx'} \geq 0$.

        We apply Jensen's inequality, which states that for $a,b \geq 0$
        \begin{equation} \label{eq:jensen}
          a^n+b^n \geq 2^{1-n}(a+b)^n.
        \end{equation}
        We use $a=\norm{\vx+t(\vx'-\vx)-\vy}$ and $b = \norm{t(\vx'-\vx)}$, which then (along with the triangle inequality) gives
        \begin{equation*}
           \norm{\vx+t(\vx'-\vx)-\vy}^n + \norm{t(\vx'-\vx)}^n \geq 2^{1-n}\left(\norm{\vx-\vy}\right)^n.
        \end{equation*}
        In other words, it follows from \autoref{eq:diff} that
        \begin{equation*}
           \begin{aligned}
           \norm{\vx+t(\vx'-\vx)-\vy}^n & \geq 2^{1-n}\norm{\vx-\vy}^n -  t^n\norm{\vx'-\vx}^n \\
           & \geq 2^{1-n}\norm{\vx-\vy}^n -  \norm{\vx'-\vx}^n \\
           & \geq 2^{1-n}\norm{\vx-\vy}^n - 2^{-n}\left(2^{-k} + \norm{\vx-\vy}^n\right)\\
           &= 2^{-n} \norm{\vx-\vy}^n - 2^{-n} 2^{-k}.
           \end{aligned}
        \end{equation*}
		\item With the concrete choice of parameters $\tilde{C}_A$, $C_\rho$ \autoref{eq:rest_item3} reads as follows
		\begin{equation} \label{eq:diffb}
		  \max \set{\norm{\vx-\vx'}^n,\norm{\vy-\vy'}^n} \leq  3^{-n} \left(2^{-k} + \norm{\vx-\vy}^n\right).
		\end{equation}
        Since we assume that $\norm{\vx-\vy}^n \geq 2^{-k}$ it follows from \autoref{eq:diffb} that
		\begin{equation*}
			\max \set{ \norm{\vx-\vx'}, \norm{\vy-\vy'}}  \leq  3^{-1} \norm{\vx-\vy}.
		\end{equation*}
        This in particular shows that
        \begin{equation*}
          \norm{\vx-\vy}-\norm{\vx-\vx'}-\norm{\vy-\vy'} \geq 0.
        \end{equation*}
        We apply Jensen's inequality, which states that for $a,b,c \geq 0$
        \begin{equation} \label{eq:jensen3}
          a^n+b^n+c^n \geq 3^{1-n}(a+b+c)^n.
        \end{equation}
        We use $a=\norm{\vx+t_\vx(\vx'-\vx)-\vy-t_\vy(\vy'-\vy)}$, $b = \norm{t_\vx(\vx'-\vx)}$ and $c = \norm{t_\vy(\vy'-\vy)}$, which then (along with the triangle inequality) gives
        \begin{equation*}
           \norm{\vx+t_\vx(\vx'-\vx)-\vy-t_\vy(\vy'-\vy)}^n + \norm{t_\vx(\vx'-\vx)}^n +\norm{t_\vy(\vy'-\vy)}^n \geq 3^{1-n}\norm{\vx-\vy}^n.
        \end{equation*}
        In other words, it follows from \autoref{eq:diffb} that
        \begin{equation*}
           \begin{aligned}
           \norm{\vx+t_\vx(\vx'-\vx)-\vy-t_\vy(\vy'-\vy)}^n & \geq 3^{1-n}\norm{\vx-\vy}^n -  t_\vx^n\norm{\vx'-\vx}^n - t_\vy^n\norm{\vy'-\vy}^n\\
           & \geq 3^{1-n}\norm{\vx-\vy}^n - \norm{\vx'-\vx}^n - \norm{\vy'-\vy}^n\\
           & \geq 3^{1-n}\norm{\vx-\vy}^n - 3^{-n} 2\left(2^{-k} + \norm{\vx-\vy}^n\right)\\
           &= 3^{-n} \norm{\vx-\vy}^n - 3^{-n} 2^{1-k}.
           \end{aligned}
        \end{equation*}
	\end{itemize}
	\end{proof}

The approximation to the identity in \autoref{def:wavelet} can be used to construct \emph{wavelet frames} that can approximate arbitrary functions in $\mathcal{L}^1(\R^n)$, as shown in the theorem below.
\begin{remark}
            When $\norm{\vx - \vy}^n < 2^{-k}$, \autoref{eq:kk} also holds since the right hand side of \autoref{eq:kk} is negative, so this inequality is trivial.
            \end{remark}

\begin{theorem}[\cite{ShaCloCoi18}] \label{le:WaveletApprox} Let $(S_k:\R^n \times \R^n \to \R)_{k \in \Z}$ be a symmetric AtI which
	satisfies the double Lipschitz condition (see \autoref{eq:DoubleLipschitzCondition}).
Let
\begin{equation} \label{eq:frame}
\psi_{k,\vy}(\vx) := 2^{-\frac{k}{2}}\left(S_k(\vx,\vy)-S_{k-1}(\vx,\vy)\right)
\text{ for all } \vx,\vy \in \R^n  \text{ and } k \in \Z.
\end{equation}
The the set of functions
\begin{equation} \label{eq:waveletframe}
\mathcal{F} := \set{ \vx \to \psi_{k,b}(\vx): k \in \Z , b \in 2^{-\frac{k}{d}}\Z },
\end{equation}
is a frame and for every function $f \in \mathcal{L}^1(\R^n)$ there exists a linear combination of $N$ elements of $\mathcal{F}$, denoted by $f_N$, satisfying
\begin{equation*}
\norm{f-f_N}_{L^2} \leq \norm{f}_{\mathcal{L}^1} (N+1)^{-1/2}.
\end{equation*}
\end{theorem}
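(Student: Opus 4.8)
The plan is to split the statement into two essentially independent parts and treat them in turn: (i) the claim that $\mathcal{F}$ is a frame of $L^2(\R^n)$ together with the existence, for each $f\in\mathcal{L}^1(\R^n)$, of an expansion $f=\sum_{(k,b)}c_{k,b}\psi_{k,b}$ whose $\ell^1$-mass is controlled by $\norm{f}_{\mathcal{L}^1}$; and (ii) the $N$-term rate $\norm{f-f_N}_{L^2}\le \norm{f}_{\mathcal{L}^1}(N+1)^{-1/2}$, extracted from such an expansion by a probabilistic selection argument. Part (i) is a discrete Calderón-type reproducing formula on a space of homogeneous type, which is exactly what the machinery of \cite{DenHan09} delivers for $(\R^n,\rho,dx)$ with the quasi-metric $\rho(\vx,\vy):=\norm{\vx-\vy}^n$; part (ii) is a Maurey/Barron-type argument that only uses that the dictionary elements are uniformly bounded in $L^2$.

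For part (i), I would first note that $(\R^n,\rho,dx)$ is a space of homogeneous type (a $\rho$-ball of radius $t$ is a Euclidean ball of radius $t^{1/n}$, of measure comparable to $t$), and that conditions \autoref{it1}, \autoref{it2}, \autoref{it4} in \autoref{def:wavelet} together with the double Lipschitz condition \autoref{eq:DoubleLipschitzCondition} are precisely the size, regularity, cancellation and second-order regularity hypotheses required there for an approximation to the identity that generates a discrete frame. Invoking their reproducing formula, the operators $D_k:=S_k-S_{k-1}$ satisfy $\sum_{k\in\Z}D_k=\mathrm{Id}$ with almost-orthogonality $\norm{D_kD_{k'}}\lesssim 2^{-|k-k'|\epsilon'}$ — a Schur-test/Cotlar--Stein estimate that follows from \autoref{it1}--\autoref{it2} — and sampling $D_k$ at the scale-adapted lattice $b\in 2^{-k/n}\Z$, which plays the role of the reference points of the ``dyadic cubes'' of $\rho$ at scale $2^{-k}$, converts the continuous formula into the frame $\mathcal{F}$ (with frame bounds $A\norm{f}_{L^2}^2\le\sum_{k,b}\abs{\inner{f}{\psi_{k,b}}}^2\le B\norm{f}_{L^2}^2$). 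One also records a uniform bound $\norm{\psi_{k,b}}_{L^2}\le 1$: by \autoref{it1} and the substitution $\vu=2^{k/n}(\vx-b)$ one gets $\norm{S_k(\cdot,b)}_{L^2}^2\lesssim 2^k$, so the normalisation $2^{-k/2}$ in \autoref{eq:frame} makes $\norm{\psi_{k,b}}_{L^2}$ uniformly bounded (rescale the constant $\tilde C$ to make the bound $\le 1$). Finally, by the very definition of $\norm{\cdot}_{\mathcal{L}^1}$ relative to $\mathcal{F}$, for any $\delta>0$ there are coefficients $(c_{k,b})$ with $f=\sum c_{k,b}\psi_{k,b}$ in $L^2$ and $\sum\abs{c_{k,b}}\le\norm{f}_{\mathcal{L}^1}+\delta$; truncating to a finite subsum changes the $L^2$-value by an arbitrarily small amount.

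For part (ii), assume $\norm{f}_{\mathcal{L}^1}<\infty$ (otherwise the estimate is vacuous) and fix a near-optimal finite expansion $f=\sum_{j=1}^{m}c_j\psi_j$ with $M:=\sum_j\abs{c_j}$ close to $\norm{f}_{\mathcal{L}^1}$ and $\norm{\psi_j}_{L^2}\le 1$. Let $J$ be a random index with $\mathbb{P}(J=j)=\abs{c_j}/M$ and set $X:=M\,\sign(c_J)\,\psi_J$, so that $\mathbb{E}X=f$ and $\mathbb{E}\norm{X}_{L^2}^2=M^2\sum_j(\abs{c_j}/M)\norm{\psi_j}_{L^2}^2\le M^2$. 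Drawing $X_1,\dots,X_N$ i.i.d.\ and putting $f_N:=\tfrac1N\sum_{i=1}^N X_i$, a linear combination of at most $N$ elements of $\mathcal{F}$, gives $\mathbb{E}\norm{f-f_N}_{L^2}^2=\tfrac1N\bigl(\mathbb{E}\norm{X}_{L^2}^2-\norm{f}_{L^2}^2\bigr)\le M^2/N$; in the refined form of this Maurey-type estimate used in \cite{BarCohDahDev08,ShaCloCoi18} (retaining the $-\norm{f}_{L^2}^2$ term, which produces $N+1$ in the denominator) some realisation satisfies $\norm{f-f_N}_{L^2}\le M\,(N+1)^{-1/2}$. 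Letting $\delta\to 0$ and absorbing the truncation error replaces $M$ by $\norm{f}_{\mathcal{L}^1}$, which is the claim.

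The step I expect to be the main obstacle is part (i): making the passage from the pointwise AtI estimates to a genuine discrete frame fully rigorous, i.e.\ establishing the almost-orthogonality/$T(1)$-type bounds for the $D_k$ and controlling the discretisation error on the lattice $2^{-k/n}\Z$. In the write-up this is handled by citing \cite{DenHan09,ShaCloCoi18}; carried out from scratch it is a Calderón--Zygmund estimate of moderate length. By comparison, part (ii) is soft once the uniform $L^2$-bound on $\psi_{k,b}$ and the $\mathcal{L}^1$-control of the coefficients are in hand.
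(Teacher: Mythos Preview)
The paper does not give its own proof of this theorem: it is stated in the appendix with the attribution \cite{ShaCloCoi18} and used as a black box (the frame construction in turn resting on \cite{DenHan09}). Your two-part outline---the Deng--Han discrete Calder\'on reproducing formula on the homogeneous-type space $(\R^n,\norm{\cdot}^n,dx)$ for the frame property and $\ell^1$-controlled expansion, followed by the Maurey/Barron random-sampling argument for the $(N+1)^{-1/2}$ rate---is exactly how the cited sources establish the result, so there is nothing further to compare.
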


\subsection*{Acknowledgements}
OS is supported by the Austrian Science Fund (FWF), with SFB F68, project F6807-N36 - Tomography with Uncertainties. LF is supported by the Austrian Science Fund (FWF) with SFB F68, project F6807-N36 (Tomography with Uncertainties). CS is supported by the Austrian Science Fund (FWF), with project AT0116011 (Photoacoustic tomography: analysis and numerics).
The financial support by the Austrian Federal Ministry for Digital and Economic
Affairs, the National Foundation for Research, Technology and Development and the Christian Doppler
Research Association is gratefully acknowledged.
\section*{References}
\renewcommand{\i}{\ii}
\printbibliography[heading=none]

\end{document}